\documentclass[reqno]{amsart}

\pdfoutput=1
\usepackage[all,cmtip,line]{xy}
\usepackage{tikz}

\setlength\textwidth{6.0in}
\setlength\oddsidemargin{0in}
\setlength\evensidemargin{0in}
\setlength\parindent{0in}
\setlength\parskip{0.1in}

\usepackage{amsmath, amstext, amssymb, amsthm, amscd}
\usepackage{scalefnt}
\usepackage{microtype}

\usepackage[colorlinks,bookmarks=false]{hyperref}

\usepackage{url}

\hypersetup{citecolor=blue}

\newtheorem{theorem}{Theorem}

\newtheorem{lemma}[theorem]{Lemma}

\theoremstyle{definition}
\newtheorem{definition}[theorem]{Definition}
\newtheorem{remark}[theorem]{Remark}
\newtheorem{example}[theorem]{Example}

\newcommand{\eps}{\epsilon}
\newcommand{\la}{\lambda}
\newcommand{\G}{{\mathbb G}}
\newcommand{\R}{{\mathbb R}}

\newcommand{\Z}{{\mathbb Z}}
\newcommand{\sC}{{\mathcal C}}
\newcommand{\Q}{{\mathbb Q}}

\newcommand{\A}{{\mathbb A}}
\newcommand{\sE}{{\mathcal E}}
\newcommand{\Proj}{{\mathbb P}}
\newcommand{\Aut}{\mathrm{Aut}}
\newcommand{\Gal}{\mathrm{Gal}}
\newcommand{\I}{\mathrm{I}}

\newcommand{\III}{\mathrm{III}}
\newcommand{\II}{\mathrm{II}}
\newcommand{\Frob}{\mathrm{Frob}}

\DeclareMathOperator{\Pic}{Pic}
\DeclareMathOperator{\NS}{NS}
\DeclareMathOperator{\MW}{MW}

\usepackage{hyperref}

\title[Multiplicative excellent families for $E_7$ or
  $E_8$]{Multiplicative excellent families of elliptic surfaces of
  type $E_7$ or $E_8$}

\keywords{rational elliptic surfaces, multiplicative invariants,
  inverse Galois problem, Weyl group, Mordell-Weil group}
\subjclass[2000]{primary 14J27; secondary 11G05, 12F10, 13A50}

\author{Abhinav Kumar}
\address{Department of Mathematics\\
Massachusetts Institute of Technology\\
Cambridge, MA 02139, USA}
\email{abhinav@math.mit.edu}

\author{Tetsuji Shioda}
\address{Department of Mathematics\\
Rikkyo University\\
Tokyo 171-8501, Japan}
\email{shioda@rikkyo.ac.jp}

\thanks{Kumar was supported in part by NSF CAREER grant DMS-0952486,
  and by a grant from the Solomon Buchsbaum Research Fund. Shioda was
  partially supported by JSPS Grant-in-Aid for Scientific Research
  (C)20540051.}

\date{January 26, 2015}

\begin{document}

\begin{abstract}

  We describe explicit multiplicative excellent families of rational
  elliptic surfaces with Galois group isomorphic to the Weyl group of
  the root lattices $E_7$ or $E_8$. The Weierstrass coefficients of
  each family are related by an invertible polynomial transformation
  to the generators of the multiplicative invariant ring of the
  associated Weyl group, given by the fundamental characters of the
  corresponding Lie group. As an application, we give examples of
  elliptic surfaces with multiplicative reduction and all sections
  defined over $\Q$ for most of the entries of fiber configurations
  and Mordell-Weil lattices in \cite{OS}, as well as examples of
  explicit polynomials with Galois group $W(E_7)$ or $W(E_8)$.
\end{abstract}

\maketitle

\section{Introduction}

Given an elliptic curve $E$ over a field $K$, the determination of its
Mordell-Weil group is a fundamental problem in algebraic geometry and
number theory. When $K = k(t)$ is a rational function field in one
variable, this question becomes a geometrical question of
understanding sections of an elliptic surface with section. Lattice
theoretic methods to attack this problem were described in
\cite{Sh1}. In particular, when $\sE \rightarrow \Proj^1_t$ is a
rational elliptic surface given as a minimal proper model of
$$
y^2 + a_1(t) xy + a_3(t) y = x^3 + a_2(t) x^2 + a_4(t) x + a_6(t)
$$
with $a_i(t) \in k[t]$ of degree at most $i$, the possible
configurations (types) of bad fibers and Mordell-Weil groups were
analyzed by Oguiso and Shioda \cite{OS}.

In \cite{Sh2}, the second author studied sections for some families of
elliptic surfaces with an additive fiber, by means of the
specialization map, and obtained a relation between the coefficients
of the Weierstrass equation and the fundamental invariants of the
corresponding Weyl groups. This was expanded in \cite{SU}, which
studied families with a bad fiber of additive reduction more
exhaustively. The formal notion of an excellent family was defined
(see the next section), and the authors found excellent families for
many of the ``admissible'' types.

The analysis of rational elliptic surfaces of high Mordell-Weil rank,
but with a fiber of multiplicative reduction, is much more
challenging. However, understanding this situation is arguably more
fundamental, since if we write down a ``random'' elliptic surface,
then with probability close to $1$ it will have Mordell-Weil lattice
$E_8$ and twelve nodal fibers (i.e. of multiplicative reduction). To
be more precise, if we choose Weierstrass coefficients $a_i(t)$ of
degree $i$, with coefficients chosen uniformly at random from among
rational numbers (say) of height at most $N$, then as $N \rightarrow
\infty$ the surface will satisfy the above condition with probability
approaching $1$. One can make a similar statement for rational
elliptic surfaces chosen to have Mordell-Weil lattice $E_7^*$,
$E_6^*$, etc.

In a recent work \cite{Sh4}, this study was carried out for elliptic
surfaces with a fiber of type $\I_3$ and Mordell-Weil lattice
isometric to $E_6^*$, through a ``multiplicative excellent family'' of
type $E_6$. We will describe this case briefly in Section
\ref{E6recap}. The main result of this paper shows that two explicitly
described families of rational elliptic surfaces with Mordell-Weil
lattices $E_7^*$ or $E_8$ are multiplicative excellent. The proof
involves a surprising connection with representation theory of the
corresponding Lie groups, and in particular, their fundamental
characters.  In particular, we deduce that the Weierstrass
coefficients give another natural set of generators for the
multiplicative invariants of the respective Weyl groups, as a
polynomial ring. We note that similar formulas were derived by Eguchi
and Sakai \cite{ES} using calculations from string theory and mirror
symmetry.

The idea of an excellent family is quite useful and important in
number theory. An excellent family of algebraic varieties leads to a
Galois extension $F(\mu)/F(\lambda)$ of two purely transcendental
extensions of a number field $F$ (say $\Q$), with Galois group a
desired finite group $G$. This setup has an immediate number theoretic
application, since one may specialize the parameters $\lambda$ and
apply Hilbert's irreducibility theorem to obtain Galois extensions
over $\Q$ with the same Galois group. Furthermore, we can make the
construction effective if appropriate properties of the group $G$ are
known (see examples \ref{bigexampleE7} and \ref{bigexampleE8} for the
case $G=W(E_7)$ or $W(E_8)$). At the same time, an excellent family
will give rise to a split situation very easily, by specializing the
parameters $\mu$ instead. For examples, in the situation considered in
our paper, we obtain elliptic curves over $\Q(t)$ with Mordell-Weil
rank $7$ or $8$ together with explicit generators for the Mordell-Weil
group (see examples \ref{splitexampleE7} and
\ref{splitexampleE8}). There are also applications to geometric
specialization or degeneration of the family. Therefore, it is
desirable (but quite nontrivial) to construct explicit excellent
families of algebraic varieties. Such a situation is quite rare in
general: theoretically, any finite reflection group is a candidate,
but it is not generally neatly related to an algebraic geometric
family. Hilbert treated the case of the symmetric group $S_n$,
corresponding to families of zero-dimensional varieties. Not many
examples were known before the (additive) excellent families for the
Weyl groups of the exceptional Lie groups $E_6$, $E_7$ and $E_8$ were
given in \cite{Sh2}, using the theory of Mordell-Weil lattices. Here,
we finish the story for the multiplicative excellent families for
these Weyl groups.

\section{Mordell-Weil lattices and excellent families}

Let $X \stackrel{\pi}{\rightarrow} \Proj^1$ be an elliptic surface
with section $\sigma: \Proj^1 \rightarrow X$, i.e. a proper relatively
minimal model of its generic fiber, which is an elliptic curve. We
denote the image of $\sigma$ by $O$, which we take to be the zero
section of the N\'eron model. We let $F$ be the class of a fiber in
$\Pic(X) \cong \NS(X)$, and let the reducible fibers of $\pi$ lie over
$\nu_1, \dots, \nu_k \in \Proj^1$. The non-identity components of
$\pi^{-1}(\nu_i)$ give rise to a sublattice $T_i$ of $\NS(X)$, which
is (the negative of) a root lattice (see \cite{Ko,T}). The {\em trivial
  lattice} $T$ is $\Z O \oplus \Z F \oplus (\oplus T_i)$, and we have
the isomorphism $\MW(X / \Proj^1) \cong \NS(X)/T$, which describes the
Mordell-Weil group. In fact, one can induce a positive definite
pairing on the Mordell-Weil group modulo torsion, by inducing it from
the negative of the intersection pairing on $\NS(X)$. We refer the
reader to \cite{Sh1} for more details. In this paper, we will call
$\oplus T_i$ the {\em fibral lattice}.

Next, we recall the notion of an {\em excellent family} with Galois
group $G$ from \cite{SU}. Suppose $X \rightarrow \A^n$ is a family of
algebraic varieties, varying with respect to $n$ parameters $\la_1,
\dots, \la_n$. The generic member of this family $X_\la$ is a variety
over the rational function field $k_0 = \Q(\la)$. Let $k =
\overline{k_0}$ be the algebraic closure, and suppose that
$\sC(X_\la)$ is a group of algebraic cycles on $X_\la$ over the field
$k$ (in other words, it is a group of algebraic cycles on $X_\la
\times_{k_0} k$). Suppose in addition that there is an isomorphism
$\phi_\la: \sC(X_\la) \otimes \Q \cong V$ for a fixed vector space
$V$, and $\sC(X_\la)$ is preserved by the Galois group
$\Gal(k/k_0)$. Then we have the Galois representation
$$
\rho_\la: \Gal(k/k_0) \rightarrow \Aut(\sC(X_\la)) \rightarrow \Aut(V).
$$
We let $k_\la$ be the fixed field of the kernel of $\rho_\la$, i.e. it
is the smallest extension of $k_0$ over which the cycles of $\sC(\la)$
are defined. We call it the {\em splitting field} of $\sC(X_\la)$.

Now let $G$ be a finite reflection group acting on the space $V$. 
\begin{definition} \label{adddefn}
  We say $\{ X_\la \}$ is an {\em excellent family} with Galois group
  $G$ if the following conditions hold:
\begin{enumerate}
\item The image of $\rho_\la$ is equal to $G$.
\item There is a $\Gal(k/k_0)$-equivariant evaluation map $s:
  \sC(X_\la) \rightarrow k$.
\item There exists a basis $\{Z_1, \dots, Z_n \}$ of $\sC(X_\la)$ such
  that if we set $u_i = s(Z_i)$, then $u_1, \dots, u_n$ are
  algebraically independent over $\Q$.
\item $\Q[u_1, \dots, u_n]^G = \Q[\la_1, \dots, \la_n]$.
\end{enumerate}
\end{definition}

As an example, for $G = W(E_8)$, consider the following family of
rational elliptic surfaces
$$
y^2 = x^3 + x \left( \sum_{i=0}^3 p_{20 - 6i} t^i \right) + \left( \sum_{j=0}^3 p_{30 - 6j} t^j + t^5 \right)
$$
over $k_0 = \Q(\la)$, with $\la = (p_2, p_8, p_{12}, p_{14}, p_{18}, p_{20},
p_{24}, p_{30})$. It is shown in \cite{Sh2} that this is an excellent
family with Galois group $G$. The $p_i$ are related to the fundamental
invariants of the Weyl group of $E_8$, as is suggested by their
degrees (subscripts).

We now define the notion of a {\em multiplicative excellent family}
for a group $G$. As before, $X \rightarrow \A^n$ is a family of
algebraic varieties, varying with respect to $\la = (\la_1, \dots,
\la_n)$, and $\sC(X_\la)$ is a group of algebraic cycles on $X_\la$,
isomorphic (via a fixed isomorphism) to a fixed abelian group $M$. The
fields $k_0$ and $k$ are as before, and we have a Galois
representation
$$
\rho_\la: \Gal(k/k_0) \rightarrow \Aut(\sC(X_\la)) \rightarrow \Aut(M).
$$

Suppose that $G$ is a group acting on $M$.

\begin{definition} \label{multdefn}
  We say $\{ X_\la \}$ is a {\em multiplicative excellent family} with
  Galois group $G$ if the following conditions hold:
\begin{enumerate}
\item The image of $\rho_\la$ is equal to $G$. 
\item There is a $\Gal(k/k_0)$-equivariant evaluation map $s:
  \sC(X_\la) \rightarrow k^*$.
\item There exists a basis $\{Z_1, \dots, Z_n \}$ of $\sC(X_\la)$ such
  that if we set $u_i = s(Z_i)$, then $u_1, \dots, u_n$ are
  algebraically independent over $\Q$.
\item $\Q[u_1, \dots, u_n, u_1^{-1}, \dots, u_n^{-1}]^G = \Q[\la_1, \dots, \la_n]$.
\end{enumerate}
\end{definition}

\begin{remark} 
Though we use similar notation, the specialization map $s$ and
the $u_i$ in the multiplicative case are quite different from the ones
in the additive case. Intuitively, one may think of these as
exponentiated versions of the corresponding objects in the additive
case. However, we want the specialization map to be an algebraic
morphism, and so in general (additive) excellent families specified by
Definition \ref{adddefn} will be very different from multiplicative
excellent families specified by Definition \ref{multdefn}.
\end{remark}

In our examples, $G$ will be a finite reflection group acting on a
lattice in Euclidean space, which will be our choice for $M$. However,
what is relevant here is not the ring of (additive) invariants of $G$
on the vector space spanned by $M$. Instead, note that the action of
$G$ on $M$ gives rise to a ``multiplicative'' or ``monomial'' action
of $G$ on the group algebra $\Q[M]$, and we will be interested in the
polynomials on this space which are invariant under $G$. This is the
subject of {\em multiplicative invariant theory} (see, for example,
\cite{L}). In the case when $G$ is the automorphism group of a root
lattice or root system, multiplicative invariants were classically
studied by using the terminology of ``exponentiated'' roots
$e^{\alpha}$ (for instance, see \cite[Section VI.3]{B}).

\section{The $E_6$ case} \label{E6recap}

We now briefly describe the construction of multiplicative excellent
family in \cite{Sh4}. Consider the family of rational elliptic surfaces
$S_\la$ with Weierstrass equation
$$
y^2 + txy = x^3 + (p_0 + p_1 t + p_2 t^2) \,x + q_0 + q_1 t + q_2 t^2 + t^3
$$
with parameter $\la = (p_0, p_1, p_2, q_0, q_1, q_2)$. The surface
$S_\la$ generically only has one reducible fiber at $t = \infty$, of
type $\I_3$. Therefore, the Mordell-Weil lattice $M_\la$ of $S_\la$ is
isomorphic to $E_6^*$. There are $54$ minimal sections of height
$4/3$, and exactly half of them have the property that $x$ and $y$ are
linear in $t$. If we have
\begin{align*}
x &= at + b \\
y &= ct + d,
\end{align*}
then substituting these back in to the Weierstrass equation, we get a
system of equations, and we may easily eliminate $b,c,d$ from the
system to get a monic equation of degree $27$ (subject to a genericity
assumption), which we write as $\Phi_\la(a) = 0$. Also, note that the
specialization of a section of height $4/3$ to the fiber at $\infty$
gives us a point on one of the two non-identity components of the
special fiber of the N\'eron model (the same component for all the
$27$ sections). Identifying the smooth points of this component with
$\G_m \times \{1\} \subset \G_m \times (\Z/3\Z)$, the specialization
map $s$ takes the section to $(-1/a, 1)$. Let the specializations be
$s_i = -1/a_i$ for $1 \leq i \leq 27$. We have
\begin{align*}
\Phi_\la(X) &= \prod_{i=1}^{27} (X - a_i) \\
        &= \prod_{i=1}^{27} (X + 1/s_i) \\
        &= X^{27} + \eps_{-1} X^{26} + \eps_{-2} X^{25} + \dots + \eps_4 X^4 + \eps_3 X^3 + \eps_2 X^2 + \eps_1 X + 1.
\end{align*}
Here $\eps_i$ is the $i$'th elementary symmetric polynomial of the
$s_i$ and $\eps_{-i}$ that of the $1/s_i$. The coefficients of
$\Phi_\la(X)$ are polynomials in the coordinates of $\la$, and we may
use the equations for $\eps_1, \eps_2, \eps_3, \eps_4, \eps_{-1}$ and
$\eps_{-2}$ to solve for $p_0, \dots, q_3$. However, the resulting
solution has $\eps_{-2}$ in the denominator. We may remedy this situation
as follows: consider the construction of $E_6^*$ as described in
\cite{Sh5}: let $v_1, \dots, v_6$ be vectors in $\R^6$ with $\langle
v_i, v_j \rangle = \delta_{ij} + 1/3$, and let $u = (\sum v_i)/3$. The
$\Z$-span of $v_1, \dots, v_6, u$ is a lattice $L$ isometric to
$E_6^*$. It is clear that $v_1, \dots,v_5, u$ form a basis of
$L$. Here, we choose an isometry between the Mordell-Weil lattice and
the lattice $L$, and let the specializations of $v_1, \dots, v_6, u$
be $s_1, \dots, s_6, r$ respectively. These satisfy $s_1 s_2 \dots s_6
= r^3$. The fifty-four nonzero minimal vectors of $E_6^*$ split up
into two cosets (modulo $E_6$) of twenty-seven each, of which we have
chosen one. The specializations of these twenty-seven special sections
are given by
$$
\{ s_1, \dots, s_{27} \} := \{ s_i : 1 \leq i \leq 6 \} \bigcup \{ s_i/r : 1 \leq i \leq 6 \} \bigcup \{ r/(s_i s_j) : 1 \leq i < j \leq 6 \}.
$$
Let 
$$
\delta_1 = r + \frac{1}{r} + \sum_{ i \neq j} \frac{s_i}{s_j} + \sum_{i < j < k} \left( \frac{r}{s_i s_j s_k} + \frac{s_i s_j s_k}{r} \right).
$$
Then $\delta_1$ belongs to the $G = W(E_6)$-invariants of $\Q[s_1,
\dots, s_5, r, s_1^{-1}, \dots, s_5^{-1},r^{-1}]$, and it is shown by explicit
computation in \cite{Sh4} that
$$
\Q[s_1, \dots, s_5, r, s_1^{-1}, \dots, s_5^{-1}, r^{-1}]^G = \Q[\delta_1, \eps_1, \eps_2, \eps_3, \eps_{-1}, \eps_{-2}] = \Q[p_0, p_1, p_2, q_0, q_1, q_2].
$$
The explicit relation showing the second equality is as follows.
\begin{align*}
& \delta_1 = -2 p_1 \\
& \eps_1 = 6 p_2 \\
& \eps_{-1} = p_2^2 - q_2 \\
& \eps_2 = 13 p_2^2 + p_0 - q_2 \\
& \eps_{-2} = -2p_1 p_2 + 6p_2 + q_1 \\
& \eps_3 = 8 p_2^3 + 2 p_0 p_2 + p_1^2 - 6 p_1 - q_0 + 9.
\end{align*}
We make the following additional observation. The six fundamental
representations of the Lie algebra $E_6$ correspond to the fundamental
weights in the following diagram, which displays the standard labeling
of these representations.

\begin{center}
\begin{tikzpicture}

\draw (0,0)--(4,0);
\draw (2,0)--(2,1);

\fill [white] (0,0) circle (0.1);
\fill [white] (1,0) circle (0.1);
\fill [white] (2,0) circle (0.1);
\fill [white] (3,0) circle (0.1);
\fill [white] (4,0) circle (0.1);
\fill [white] (2,1) circle (0.1);

\draw (0,0) circle (0.1);
\draw (1,0) circle (0.1);
\draw (2,0) circle (0.1);
\draw (3,0) circle (0.1);
\draw (4,0) circle (0.1);
\draw (2,1) circle (0.1);

\draw (0,-0.2) [below] node{$1$};
\draw (1,-0.2) [below] node{$3$};
\draw (2,-0.2) [below] node{$4$};
\draw (3,-0.2) [below] node{$5$};
\draw (4,-0.2) [below] node{$6$};
\draw (2.2,1) [right] node{$2$};

\end{tikzpicture}
\end{center}
The dimensions of these representations $V_1, \dots, V_6$ are $27, 78,
351, 2925, 351, 27$ respectively, and their characters are related to
$\eps_1, \eps_2, \eps_3, \eps_{-1}, \eps_{-2}, \delta_1$ by the
following nice transformation.
\begin{align*}
\chi_1 &= \eps_1 & 
\chi_2 &= \delta_1 + 6 &
\chi_3 &= \eps_2 \\
\chi_4 &= \eps_3 &
\chi_5 &= \eps_{-2} &
\chi_6 &= \eps_{-1}.
\end{align*}

This explains the reason for bringing in $\delta_1$ into the picture,
and also why there is a denominator when solving for $p_0, \dots, q_2$
in terms of $\eps_1, \dots, \eps_4, \eps_{-1},\eps_{-2}$, as remarked
in \cite{Sh4}. The coefficients $\eps_j$ are essentially the characters
of $\bigwedge^j V$, where $V = V_1$ is the first fundamental representation,
while $\eps_{-j}$ are those of $\bigwedge^j V^*$, where $V_6 = V^*$. Note
that $\bigwedge^3 V \cong \bigwedge^3 V^*$. Therefore, from the expressions for
$\eps_1, \eps_2, \eps_3, \eps_{-1}, \eps_{-2}$, we may obtain $p_2,
q_2, p_0, q_1, q_0$, in terms of the remaining variable $p_1$, without
introducing any denominators. However, representation $V_2$ cannot be
obtained as a direct summand with multiplicity $1$ from a tensor
product of $\bigwedge^j V$ (for $1 \leq j \leq 3$) and $\bigwedge^k V^*$ (for $1
\leq k \leq 2$). On the other hand, we do have the following
isomorphism:
$$
(V_2 \otimes V_5) \oplus V_5 \oplus V_1 \cong \bigwedge^4 V_1 \oplus (V_3 \otimes V_6) \oplus (V_6 \otimes V_6).
$$
Therefore, we are able to solve for $p_1$ if we introduce a
denominator of $\eps_{-2}$, which is the character of $V_5$.

\section{The $E_7$ case}

\subsection{Results}

Next, we exhibit a multiplicative excellent family for the Weyl group
of $E_7$. It is given by the Weierstrass equation
$$
y^2 + txy = x^3 + (p_0 + p_1 t +p_2 t^2) \, x + q_0 + q_1 t + q_2 t^2 + q_3 t^3 - t^4.
$$ For generic $\la = (p_0, \dots, p_2, q_0, \dots, q_3)$, this
rational elliptic surface $X_\la$ has a fiber of type $\I_2$ at $t =
\infty$, and no other reducible fibers. Hence, the Mordell-Weil group
$M_\la$ is $E_7^*$. We note that any elliptic surface with a fiber of
type $\I_2$ can be put into this Weierstrass form (in general over a
small degree algebraic extension of the ground field), after a
fractional linear transformation of the parameter $t$, and Weierstrass
transformations of $x,y$.

\begin{lemma}
  The smooth part of the special fiber is isomorphic to the group
  scheme $\G_m \times \Z/2\Z$. The identity component is the
  non-singular part of the curve $y^2 + xy = x^3$. The $x$- and
  $y$-coordinates of a section of height $2$ are polynomials of
  degrees $2$ and $3$ respectively, and its specialization at $t =
  \infty$ is $(\lim_{t \to \infty} (y + tx)/y, 0) \in k^* \times
  \{0,1\}$. A section of height $3/2$ has $x$ and $y$ coordinates of
  the form
\begin{align*}
x &= at + b \\
y &= ct^2 + dt + e.
\end{align*}
and specializes at $t = \infty$ to $(c,1)$.
\end{lemma}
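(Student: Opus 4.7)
The plan is to analyze the fiber at $t = \infty$ via the substitution $s = 1/t$, $x = X/s^2$, $y = Y/s^3$, under which the Weierstrass equation becomes
\begin{equation*}
Y^2 + XY = X^3 + (p_0 s^4 + p_1 s^3 + p_2 s^2)\, X + q_0 s^6 + q_1 s^5 + q_2 s^4 + q_3 s^3 - s^2.
\end{equation*}
At $s = 0$ this reduces to $Y^2 + XY = X^3$, a nodal cubic with unique singular point $(0,0)$ and tangent directions $Y = 0$ and $Y + X = 0$. For generic $\la$ the total space is regular along this fiber, so a single blowup of the node resolves it to an $\I_2$ configuration: the strict transform $C_0$ of the cubic and the exceptional divisor $C_1$, two smooth rational curves meeting at two points. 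The zero section passes through the smooth point at infinity of the Weierstrass chart, which lies on $C_0$, so $C_0$ carries the identity. Removing the two intersection points $C_0 \cap C_1$ from $C_0$ recovers exactly the non-singular locus of $y^2 + xy = x^3$, which is thus the identity component of the N\'eron smooth locus. The smooth locus of any nodal cubic is canonically $\G_m$ (for instance via $(X, Y) \mapsto (Y+X)/Y$, the ratio of the linear forms cutting the two tangents, normalised to be $1$ at the point at infinity of the cubic), and similarly the smooth locus of $C_1$ is $\Proj^1$ minus two points, hence also $\G_m$. Together these give $\G_m \times \Z/2\Z$, matching the component structure of type $\I_2$.

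For the statements on sections I would invoke the Mordell--Weil height pairing, which in our setting reads $h(P) = 2 + 2(P \cdot O) - \mathrm{contr}_\infty(P)$ with $\mathrm{contr}_\infty(P) \in \{0, 1/2\}$ according as $P$ meets $C_0$ or $C_1$. Since $(P \cdot O)$ is a nonnegative integer, $h(P) = 2$ forces $(P \cdot O) = 0$ and $\mathrm{contr}_\infty = 0$, while $h(P) = 3/2$ forces $(P \cdot O) = 0$ and $\mathrm{contr}_\infty = 1/2$. In both cases $P$ is disjoint from $O$, so $x(t)$ and $y(t)$ are polynomials in $t$. In the $(X, Y, s)$-chart, $P$ meets $C_0$ at a smooth point precisely when $X(0) = \lim_{s \to 0} s^2 x$ and $Y(0) = \lim_{s \to 0} s^3 y$ are finite and nonzero, forcing $\deg x = 2$ and $\deg y = 3$; the specialisation is the value of $(X+Y)/Y$ at $s = 0$, which is $\lim_{t \to \infty}(y + tx)/y$. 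On the other hand, $P$ meets $C_1$ at a smooth point precisely when $X, Y$ both vanish at $s = 0$ with tangent direction $(a : c) := (dX/ds : dY/ds)|_{s=0}$ different from the cubic's two tangent directions $(1 : 0)$ and $(1 : -1)$; this corresponds to $\deg x = 1$, $\deg y = 2$, $c \neq 0$, and $c \neq -a$, giving the stated form $x = at + b$, $y = ct^2 + dt + e$.

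To conclude that the specialisation on $C_1$ is $(c, 1)$, I would substitute these formulas into the Weierstrass equation and compare the $t^4$-coefficients, yielding $c^2 + ac + 1 = 0$, equivalently $c(a + c) = -1$. The involution $P = (x, y) \mapsto -P = (x, -y - tx)$ sends the leading coefficient of $y$ from $c$ to $-(a+c) = 1/c$, so the assignment $P \mapsto c$ intertwines $P \mapsto -P$ on sections with $c \mapsto 1/c$ on $\G_m$. This pins down $c$ as the correct $\G_m$-coordinate on $C_1$, compatible with the group-scheme structure on $\G_m \times \Z/2\Z$ (with the second entry $1$ designating the non-identity component). The main subtlety I anticipate is precisely this last identification: the naive blowup parameter $Y/X|_{s=0} = c/a$ is not itself the right $\G_m$-coordinate, and one must use the relation $c(a+c) = -1$ together with the behaviour under inversion to recognise $c$ as the natural choice.
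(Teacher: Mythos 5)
Your setup (the chart at $t=\infty$, the $\I_2$ configuration, the height computation forcing the degrees of $x$ and $y$, the identification of the identity component with $\G_m$ via $(y+tx)/y$, and the relation $c^2+ac+1=0$ from the top coefficient) all matches the paper and is fine. The gap is in the last step, which is exactly the crucial claim of the lemma: you assert that because negation sends $c$ to $-(a+c)=1/c$, the leading coefficient $c$ "is pinned down as the correct $\G_m$-coordinate" on the non-identity component. Equivariance under inversion does not determine the specialization map. It would be equally consistent with the specialization being $1/c$, or $c^3$, or any odd power, or indeed any function of the section intertwining negation with inversion; more fundamentally, you have not shown that the specialization is a function of $c$ alone (times a constant) in the first place, since you never connect $c$ to the point where the section actually meets the blown-up component $C_1$, nor to the group structure on the N\'eron model beyond the single automorphism $P\mapsto -P$. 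You even flag that the naive blowup parameter is $c/a\neq c$, but the "behaviour under inversion" you invoke to bridge this is not a proof.

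What is needed, and what the paper does, is to test the candidate against the group law rather than just against negation. Concretely: (i) add a height-$3/2$ section $Q$ (leading coefficient $c$) to a height-$2$ section $P$ whose specialization on the identity component is $(s,0)$; a direct computation shows the sum again has the height-$3/2$ shape with leading coefficient $cs$. Since specialization is a homomorphism, this proves the specialization of $Q$ has the form $\kappa c$ with $\kappa$ a constant independent of $Q$. (ii) Add two height-$3/2$ sections with coefficients $c_1,c_2$ and check by computation that the sum specializes (via the already-established $(y+tx)/y$ rule on the identity component) to $(c_1c_2,0)$; this forces $\kappa^2=1$. Finally, $\kappa=\pm 1$ is a genuine convention, because $(z,\epsilon)\mapsto((-1)^{\epsilon}z,\epsilon)$ is an automorphism of $\G_m\times\Z/2\Z$ and the height-$3/2$ sections lie in the nontrivial coset of $E_7$ in $E_7^*$. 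Alternatively, you could carry out the blowup explicitly, parametrize $C_1$ compatibly with the translation action of the identity component, and compute where the section lands; but some computation of this kind, beyond inversion-equivariance, is unavoidable. Your proposal as written does not close this step.
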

\begin{proof}
  First, observe that to get a local chart for the elliptic surface
  near $t = \infty$, we set $x = \widetilde{x}/u^2$, $y =
  \widetilde{y}/u^3$ and $t = 1/u$, and look for $u$ near
  $0$. Therefore, the special fiber (before blow-up) is given by
  $\bar{y}^2 + \bar{x} \bar{y} = \bar{x}^3$, where $\bar{x} =
  \widetilde{x} |_{u = 0}$ and $\bar{y} = \widetilde{y}|_{u = 0}$ are
  the reductions of the coordinates at $u = 0$ respectively. It is an
  easy exercise to parametrize the smooth locus of this curve: it is
  given, for instance, by $\bar{x} = s/(s-1)^2, \bar{y} =
  s/(s-1)^3$. We then check that $s = (\bar{y}+\bar{x})/\bar{y}$ and
  the map from the smooth locus to $\G_m$ which takes the point
  $(\bar{x}, \bar{y})$ to $s$ is a homomorphism from the secant group
  law to multiplication in $k^*$. This proves the first half of the
  lemma. Note that we could just as well have taken $1/s$ to be the
  parameter on $\G_m$; our choice is a matter of convention. To prove
  the specialization law for sections of height $3/2$, we may, for
  instance, take the sum of such a section $Q$ with a section $P$ of
  height $2$ with specialization $(s,0)$. A direct calculation shows
  that the $y$-coordinate of the sum has top (quadratic) coefficient
  $cs$. Therefore the specialization of $Q$ must have the form $\kappa
  c$, where $\kappa$ is a constant not depending on $Q$. Finally, the
  sum of two sections $Q_1$ and $Q_2$ of height $3/2$ and having
  coefficients $c_1$ and $c_2$ for the $t^2$ term of their
  $y$-coordinates can be checked to specialize to $(c_1 c_2, 0)$. It
  follows that $\kappa = \pm 1$, and we take the plus sign as a
  convention. (It is easy to see that both choices of sign are
  legitimate, since the sections of height $2$ generate a copy of
  $E_7$, whereas the sections of height $3/2$ lie in the nontrivial
  coset of $E_7$ in $E_7^*$).
\end{proof}

There are $56$ sections of height $3/2$, with $x$ and $y$ coordinates
in the form above. Substituting the above formulas for $x$ and $y$
into the Weierstrass equation, we get the following system of
equations.
\begin{align*}
c^2 + ac + 1  &= 0 \\
q_3 + a p_2 + a^3 &= (2c + a) d + bc \\
q_2 + b p_2 + 3 a^2 b &= (2c + a) e + (b+d) d \\
q_1 + b p_1 + a p_0  + 3 a b^2 &= (2d + b) e \\
q_0 + b p_0 + b^3 &= e^2.
\end{align*}

We solve for $a$ and $b$ from the first and second equations, and then
$e$ from the third, assuming $c \neq 1$. Substituting these values
back into the last two equations, we get two equations in the
variables $c$ and $d$. Taking the resultant of these two equations
with respect to $d$, and dividing by $c^{30} (c^2 - 1)^4$, we obtain
an equation of degree $56$ in $c$, which is monic, reciprocal and has
coefficients in $\Z[\la] = \Z[p_0, \dots, q_3]$. We denote this
polynomial by
$$
\Phi_\la(X) = \prod_{i=1}^{56} \left(X - s(P) \right) = X^{56} + \eps_1 X^{55} + \eps_2 X^{54}+  \dots + \eps_1 X + \eps_0,
$$
where $P$ ranges over the $56$ minimal sections of height $3/2$.  It
is clear that $a,b,d,e$ are rational functions of $c$ with
coefficients in $k_0$.

We have a Galois representation on the Mordell-Weil lattice
$$
\rho_\la: \Gal(k/k_0) \rightarrow \Aut(M_\la) \cong \Aut(E_7^*).
$$
Here $\Aut(E_7^*) \cong \Aut(E_7) \cong W(E_7)$, the Weyl group of
type $E_7$. The {\em splitting field} of $M_\la$ is the fixed field
$k_\la$ of $\mathrm{Ker}(\rho_\la)$. By definition, $\Gal(k_\la/k_0)
\cong \textrm{Im}(\rho_\la)$. The splitting field $k_\la$ is equal to
the splitting field of the polynomial $\Phi_\la(X)$ over $k_0$, since
the Mordell-Weil group is generated by the $56$ sections of smallest
height $P_i = (a_i t + b_i, c_i t^2 + d_i t + e_i)$. We also have that
$$
k_\la = k_0(P_1, \dots, P_{56}) = k_0(c_1, \dots, c_{56}).
$$ We shall sometimes write $e^{\alpha}$, (for $\alpha \in E_7^*$ a
minimal vector) to refer to the specializations of these sections
$c(P_i)$, for convenience.

\begin{theorem} \label{excellentE7}
  Assume that $\lambda$ is generic over $\Q$, i.e the coordinates
  $p_0, \dots, q_3$ are algebraically independent over $\Q$. Then
\begin{enumerate}
\item $\rho_\la$ induces an isomorphism $\Gal(k_\la / k_0) \cong W(E_7)$.
\item The splitting field $k_\la$ is a purely transcendental extension
  of $\Q$, isomorphic to the function field $\Q(Y)$ of the toric
  hypersurface $Y \subset \G_m^8$ defined by $s_1 \dots s_7 =
  r^3$. There is an action of $W(E_7)$ on $Y$ such that
  $\Q(Y)^{W(E_7)} = k_\la^{W(E_7)} = k_0$.
\item The ring of $W(E_7)$-invariants in the affine coordinate ring
 $$
  \Q[Y] = \frac{\Q[s_i, r, 1/s_i, 1/r]}{(s_1\dots s_7 - r^3)} \cong \Q[s_1, \dots, s_6, r, s_1^{-1}, \dots, s_6^{-1}, r^{-1}]
 $$ 
 is equal to the polynomial ring
  $\Q[\la]$:
$$
\Q[Y]^{W(E_7)} = \Q[\la] = \Q[p_0, p_1, p_2, q_0, q_1, q_2, q_3].
$$
\end{enumerate}
\end{theorem}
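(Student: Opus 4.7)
The plan is to follow the blueprint of the $E_6$ case reviewed in Section \ref{E6recap}, adapted to the geometry of $E_7^*$ and the $56$-dimensional minuscule representation. First I would realize $E_7^*$ concretely by vectors $v_1,\dots,v_7,u$ subject to the single relation $v_1+\cdots+v_7 = 3u$ so that $\{v_1,\dots,v_6,u\}$ is a $\Z$-basis; setting $s_i = e^{v_i}$ and $r = e^u$ then identifies $\Q[E_7^*]$ with the coordinate ring $\Q[Y]$ of the hypersurface $Y:\; s_1\cdots s_7 = r^3$ inside $\G_m^8$. With an appropriate isometry $M_\la \cong E_7^*$, the $56$ minimal vectors fall into four $W(E_7)$-orbits of sizes $7, 7, 21, 21$, namely $\pm v_i$ and $\pm(u - v_i - v_j)$, so the $56$ specializations $c(P)$ should correspond to the monomials
\[
s_i,\quad s_i^{-1},\quad r\,s_i^{-1}s_j^{-1},\quad r^{-1}s_i s_j.
\]
Checking that each such monomial is a root of the explicit polynomial $\Phi_\la$ obtained from the resultant calculation, and that there are exactly $56$ of them, identifies $k_\la$ with $\Q(Y)$ and yields the purely transcendental description in part (2).

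Next I would prove the inclusion $\Q[\la] \subseteq \Q[Y]^{W(E_7)}$. This is immediate from Galois equivariance: the Weierstrass coefficients $p_i, q_j$ lie in $k_0$ and are fixed by the image of $\rho_\la$, acting on $\Q[Y]$ through the monomial representation of $W(E_7)$. The key task is the reverse inclusion. By the theorem of Bourbaki in multiplicative invariant theory (see \cite[Section VI.3]{B} and \cite{L}), $\Q[Y]^{W(E_7)}$ is a polynomial ring in seven generators, which may be taken to be the fundamental characters $\chi_1,\dots,\chi_7$ of the simply connected complex Lie group of type $E_7$. So it suffices to exhibit an invertible polynomial change of variables between $(\chi_1,\dots,\chi_7)$ and $(p_0,p_1,p_2,q_0,q_1,q_2,q_3)$.

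To carry this out, I would compute the elementary symmetric functions $\eps_1,\dots,\eps_{28}$ of the $56$ roots of $\Phi_\la$, which by construction already lie in $\Z[\la]$ and are invariant under $W(E_7)$, and then express each fundamental character $\chi_i$ as an explicit polynomial combination of the $\eps_k$. The minuscule character of dimension $56$ is $\eps_1$; the adjoint and the other fundamental characters appear, with lower-order corrections, inside the decompositions of the exterior powers $\Lambda^k V_{56}$ into irreducibles, so each $\chi_i$ can be solved for recursively, in direct analogy with the dictionary between the $\chi_i$, $\eps_i$ and $\delta_1$ displayed at the end of Section \ref{E6recap}. This places every $\chi_i$ in $\Q[\la]$. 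To invert, I would observe that the displayed transformation is triangular with respect to a natural grading (coming from the $t$-degrees of the Weierstrass coefficients), so the $p_i$ and $q_j$ can be recovered as polynomials in the $\chi_i$; equivalently, one may invoke that two polynomial subrings of $\Q(Y)$ of the same Krull dimension, one contained in the other and both integrally closed, must coincide.

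Finally, once $\Q[\la] = \Q[Y]^{W(E_7)}$ is established, parts (2) and (3) follow at once. For part (1), the image of $\rho_\la$ is contained in $\Aut(E_7^*) = W(E_7)$ by construction, and its fixed field on $k_\la \cong \Q(Y)$ contains $\Q(\la) = k_0$; but part (3) forces $\Q(Y)^{W(E_7)} = \Q(\la)$, so Galois theory gives $\mathrm{Im}(\rho_\la) = W(E_7)$. The principal obstacle is the character step in the third paragraph: identifying the seven fundamental characters of $E_7$ in closed form in terms of the $\eps_k$, and verifying that the resulting change of variables is a polynomial isomorphism. Because the fundamental representations of $E_7$ have large dimensions ($56, 133, 912, 1539, 8645, 27664, 365750$), the plethystic bookkeeping is delicate; this is the multiplicative analogue of the trick of introducing $\delta_1$ in the $E_6$ case, and is likely to require computer algebra to verify in full.
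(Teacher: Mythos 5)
Your overall architecture matches the paper's (identify the specializations with monomials on the toric hypersurface, reduce everything to an invertible polynomial dictionary between $\la$ and the fundamental characters, then deduce (1)--(3) by Galois theory and integrality), and invoking Bourbaki--Lorenz for the structure of the multiplicative invariant ring is a legitimate shortcut that the paper deliberately avoids so as to reprove that fact. But the heart of the theorem is precisely the dictionary $\Q[\chi_1,\dots,\chi_7]=\Q[\la]$, and your proposal does not actually prove it. You assert that each $\chi_i$ can be ``solved for recursively'' as a polynomial in the coefficients $\eps_k$ of $\Phi_\la$ via the decompositions of $\Lambda^k V_{56}$. From $\eps_1,\dots,\eps_4$ one does get $\chi_7,\chi_6,\chi_5,\chi_4$ linearly, but for $\chi_1,\chi_2,\chi_3$ this is exactly the hard point, and the $E_6$ analogy you cite points the other way: there the adjoint character could \emph{not} be extracted from the $\eps$'s without a denominator of $\eps_{-2}$, which is why $\delta_1$ (built from the $72$ roots) had to be introduced as a separate invariant. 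Correspondingly, the paper does not work with $\Phi_\la$ alone: it introduces the degree-$126$ polynomial $\Psi_\la$ coming from the height-$2$ sections, whose first two coefficients give $\chi_1$ and $\chi_3$ directly, and then solves the combined system of equations $(1)$--$(7)$, $(1')$, $(2')$, checking that the three residual equations in $p_1$ share a common linear factor. Whether $\chi_1,\chi_2,\chi_3$ even lie in $\Q[\eps_1,\dots,\eps_{28}]$ is left unresolved by your argument. Your fallback for the inversion is also false as stated: $\Q[x,xy]\subsetneq\Q[x,y]$ are two polynomial (hence integrally closed) subrings of $\Q(x,y)$ of the same Krull dimension with the same fraction field, one inside the other. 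The triangular structure that does make the inversion easy is only visible once the explicit forward formulas of Theorem \ref{explicitE7} have been computed -- i.e.\ after the step you skipped.

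There are also two circularities in the logical order. Your ``immediate'' inclusion $\Q[\la]\subseteq\Q[Y]^{W(E_7)}$ uses only that $\la$ is fixed by the image of $\rho_\la$, which a priori is just a subgroup of $W(E_7)$; surjectivity of $\rho_\la$ is part (1), which you later deduce from part (3). The paper instead gets $W(E_7)$-invariance of $\la$ from the explicit formulas $\la\in\Q[\chi_1,\dots,\chi_7]$, the $\chi_i$ being manifestly invariant. Similarly, part (2) cannot be established first: knowing that the $56$ specializations are the monomials $s_i^{\pm1}$, $(s_is_j/r)^{\pm1}$ only gives $k_\la=k_0(s_1,\dots,s_6,r)$; to conclude that $k_\la=\Q(s_1,\dots,s_6,r)$ is purely transcendental and that $\Q(Y)^{W(E_7)}=k_0$ one needs $k_0\subseteq\Q(s_1,\dots,s_6,r)$ and $k_0$ inside the full $W(E_7)$-fixed field, both of which the paper extracts from Theorem \ref{explicitE7}. (A minor slip: the $56$ minimal vectors of $E_7^*$ form a single $W(E_7)$-orbit; the split into $7,7,21,21$ is just the breakdown by monomial shape in your chosen coordinates, not an orbit decomposition.) In short, the skeleton is right, but the computational core -- producing the explicit invertible polynomial relation, for which the paper needs both $\Phi_\la$ and $\Psi_\la$ plus the resultant/interpolation work of Sections \ref{resultants}--\ref{identities} -- is assumed rather than supplied, and the shortcuts offered in its place either beg part (1) or rest on a false algebraic principle.
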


In fact, we shall prove an explicit, invertible polynomial relation
between the Weierstrass coefficients $\lambda$ and the fundamental
characters for $E_7$. Let $V_1, \dots, V_7$ be the fundamental
representations of $E_7$, and $\chi_1, \dots, \chi_7$ their characters
(on a maximal torus), as labelled below. For a description of the
fundamental modules for the exceptional Lie groups see \cite[Section
  13.8]{C}.  

\begin{center}
\begin{tikzpicture}

\draw (0,0)--(5,0);
\draw (2,0)--(2,1);

\fill [white] (0,0) circle (0.1);
\fill [white] (1,0) circle (0.1);
\fill [white] (2,0) circle (0.1);
\fill [white] (3,0) circle (0.1);
\fill [white] (4,0) circle (0.1);
\fill [white] (5,0) circle (0.1);
\fill [white] (2,1) circle (0.1);

\draw (0,0) circle (0.1);
\draw (1,0) circle (0.1);
\draw (2,0) circle (0.1);
\draw (3,0) circle (0.1);
\draw (4,0) circle (0.1);
\draw (5,0) circle (0.1);
\draw (2,1) circle (0.1);

\draw (0,-0.2) [below] node{$1$};
\draw (1,-0.2) [below] node{$3$};
\draw (2,-0.2) [below] node{$4$};
\draw (3,-0.2) [below] node{$5$};
\draw (4,-0.2) [below] node{$6$};
\draw (5,-0.2) [below] node{$7$};
\draw (2.2,1) [right] node{$2$};

\end{tikzpicture}
\end{center}

Note that since the weight lattice $E_7^*$ has been equipped with a
nice set of generators $(v_1, \dots, v_7, u)$ with $\sum v_i = 3 u$
(as in \cite{Sh5}), the characters $\chi_1, \dots, \chi_7$ lie in the
ring of Laurent polynomials $\Q[s_i, r, 1/s_i, 1/r]$ where $s_i$
corresponds to $e^{v_i}$ and $r$ to $e^u$, and are obviously invariant
under the (multiplicative) action of the Weyl group on this ring of
Laurent polynomials. Explicit formulae for the $\chi_i$ are given in
the auxiliary files.

We also note that the roots of $\Phi_\la$ are given by 
$$
s_i, \frac{1}{s_i} \textrm{ for } 1\leq i \leq 7 \quad \textrm{ and } \quad \frac{r}{s_i s_j}, \frac{s_i s_j}{r} \textrm{ for } 1 \leq i < j \leq 7.
$$

\begin{theorem} \label{explicitE7}
For generic $\lambda$ over $\Q$, we have 
$$
\Q[\chi_1, \dots, \chi_7] = \Q[p_0, p_1, p_2, q_0, q_1, q_2, q_3].
$$
The transformation between these sets of generators is
\begin{flalign*}
\chi_1 &= 6 p_2+25 \\
\chi_2 &= 6 q_3-2 p_1 \\
\chi_3 &= -q_2+13 p_2^2+108 p_2+p_0+221 \\
\chi_4 &= 9 q_3^2-6 p_1 q_3-q_2-q_0+8 p_2^3+85 p_2^2+(2 p_0+300) p_2+p_1^2+10 p_0+350 \\
\chi_5 &= (6 p_2+26) q_3+q_1-2 p_1 p_2-10 p_1 \\
\chi_6 &= -q_2+p_2^2+12 p_2+27 \\
\chi_7 &= q_3 \\
& \\
\textrm{with inverse}\\
& \\
p_2 &= (\chi_1-25)/6 \\
p_1 &= (6 \chi_7-\chi_2)/2 \\
p_0 &= -(3 \chi_6-3 \chi_3+\chi_1^2-2 \chi_1+7)/3 \\
q_3 &= \chi_7 \\
q_2 &= -(36 \chi_6-\chi_1^2-22 \chi_1+203)/36 \\
q_1 &= (24 \chi_7+6 \chi_5+(-\chi_1-5) \chi_2)/6 \\
q_0 &= (27 \chi_2^2-8 \chi_1^3-84 \chi_1^2+120 \chi_1-136)/108  \\
    & \quad -(\chi_1+2) \chi_6/3 - \chi_4 + (\chi_1+ 5) \chi_3/3.
\end{flalign*}
\end{theorem}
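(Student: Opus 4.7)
The plan is to prove the explicit polynomial relations between the two sets of generators by computing the coefficients of $\Phi_\la(X)$ in two different ways and matching them; the equality of rings $\Q[\chi_1,\dots,\chi_7]=\Q[p_0,\dots,q_3]$ will then follow from invertibility of the transformation, which is visible from its triangular shape.

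The first computation gives each coefficient $\eps_i$ of $\Phi_\la(X)$ as an explicit polynomial in $p_0,\dots,q_3$, directly from the resultant calculation described just before the theorem statement. The second computation uses the explicit description (recorded in the paper) of the $56$ roots of $\Phi_\la$ as the weights $s_i$, $1/s_i$, $r/(s_is_j)$, $s_is_j/r$ of the minuscule $56$-dimensional fundamental representation $V_7$ of $E_7$. Consequently $(-1)^i\eps_i$ is the character of $\Lambda^iV_7$, and each such character decomposes as a $\Z$-linear combination of irreducible characters of $E_7$, each of which is in turn a $\Z$-polynomial in the fundamental characters $\chi_1,\dots,\chi_7$. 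The needed decompositions of the first several exterior powers of $V_7$ can be obtained from the Weyl character formula, from standard tables of $E_7$-representations, or by direct symbolic expansion in the torus coordinates $s_i,r$ modulo the relation $s_1\cdots s_7=r^3$.

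Matching the two expressions for the low-order $\eps_i$ then produces the formulas in the theorem one character at a time. Concretely, $-\eps_1=\chi_7$ gives $\chi_7=q_3$ after reading off $\eps_1$ from the resultant; the exterior square $\Lambda^2V_7$ contributes $\chi_1$, $\chi_6$, and a constant via $\eps_2$; the cube contributes $\chi_2$ and a constant; and so on, until all of $\chi_1,\dots,\chi_7$ are pinned down. I expect the main technical obstacle to be the bookkeeping in the irreducible decompositions $\Lambda^iV_7=\bigoplus n_{i,\mu}V_\mu$, together with keeping track of the constant (trivial-summand) contributions so that each new $\chi_i$ is isolated correctly. This step is essentially computational and is the natural place to lean on computer algebra or the auxiliary character files referenced in the paper.

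Once both the forward and inverse formulas have been written down, they can be verified independently as identities in the Laurent polynomial ring
\[
\Q[s_1,\dots,s_7,r^{\pm 1}]/(s_1\cdots s_7-r^3)
\]
by substituting the explicit weight expansions of $\chi_1,\dots,\chi_7$ given in the auxiliary files. Invertibility of the transformation is then manifest from its triangular structure: $\chi_1$ determines $p_2$, $\chi_7$ determines $q_3$, $\chi_2$ (together with $\chi_7$) determines $p_1$, $\chi_6$ (with $\chi_1$) determines $q_2$, and proceeding through the list one sees that each Weierstrass coefficient is a polynomial in the $\chi_i$. This establishes both inclusions, and hence the equality $\Q[\chi_1,\dots,\chi_7]=\Q[p_0,p_1,p_2,q_0,q_1,q_2,q_3]$ together with the explicit transformation formulas of the theorem.
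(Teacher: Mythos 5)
Your overall plan---compute the coefficients $\eps_i$ of $\Phi_\la$ once by resultants in terms of $(p_0,\dots,q_3)$ and once as characters of $\Lambda^i V_7$ in terms of $\chi_1,\dots,\chi_7$, then match and solve---is the same starting point as the paper's proof, but there is a genuine gap at the step where you claim the low-order $\eps_i$ pin down the characters ``one at a time.'' The actual decompositions are $\Lambda^2V_7=V_6\oplus\mathbf{1}$, $\Lambda^3V_7=V_5\oplus V_7$, $\Lambda^4V_7=V_4\oplus V_6\oplus\mathbf{1}$, so $\eps_2=\chi_6+1$ and $\eps_3=-(\chi_7+\chi_5)$: contrary to your assertion, $\chi_1$ does not occur in $\eps_2$ nor $\chi_2$ in $\eps_3$. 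The characters $\chi_1,\chi_2,\chi_3$ first enter at $\eps_5,\eps_6,\eps_7$, and only in nonlinear combinations (terms like $\chi_2\chi_3$, $\chi_1\chi_2^2$, $\chi_2^3$, $\chi_1^3$). Thus after reading off $\chi_7,\chi_6,\chi_5,\chi_4$ from $\eps_1,\dots,\eps_4$ you are left with a nonlinear system of three equations in the three remaining unknowns, and your proposal supplies neither a method to solve it nor an argument that the solution in the theorem is the one actually satisfied by the true characters (exhibiting one polynomial solution of the matched equations does not by itself identify it with the characters; some uniqueness or selection argument is required). This is exactly why the paper brings in the second polynomial $\Psi_\la$ of degree $126$ coming from the height-$2$ sections (the roots of $E_7$, i.e.\ the adjoint representation): its first two coefficients give the simple relations $\eta_1=7-\chi_1$ and $\eta_2=\chi_3-6\chi_1+28$, after which equations $(1)$--$(4)$, $(1')$, $(2')$ determine $p_2,p_0,q_3,q_2,q_1,q_0$ in terms of the $\chi_j$ and $p_1$, and $p_1$ is then determined because the remaining equations $(5)$--$(7)$ have a single common factor linear in $p_1$.

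Your concluding verification step is also not sound as stated: substituting the weight expansions of the $\chi_j$ into the inverse formulas merely produces candidate Laurent-polynomial expressions for $p_0,\dots,q_3$ in $s_1,\dots,s_6,r$; there is no independent definition of the Weierstrass coefficients as functions of $(s_i,r)$ against which to check these, other than through the very identities you are trying to prove, so the check is circular. A non-circular verification would have to show, for instance, that with those candidate values of $\la$ the $56$ putative sections satisfy the Weierstrass equation, or that the resultant-computed $\Phi_\la(X)$ equals the product of $(X-w)$ over the $56$ weights; checking only that the forward and inverse maps compose to the identity shows invertibility, not correctness. With the $\Psi_\la$-relations (or an equivalent device) and such a uniqueness/verification argument added, the rest of your outline, including the deduction of the ring equality from polynomial invertibility, matches the paper.
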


We note that our formulas agree with those of Eguchi and Sakai
\cite{ES}, who seem to derive these by using an ansatz.

Next, we describe two examples through specialization, one of ``small
Galois'' (in which all sections are defined over $\Q[t]$) and one with
``big Galois'' (which has Galois group the full Weyl group).

\begin{example} \label{splitexampleE7}
The values 
\begin{flalign*}
p_0 &= 244655370905444111/(3 \mu^2),  \quad p_1 = -4788369529481641525125/(16 \mu^2) \\
q_3 &= 184185687325/(4 \mu), \quad p_2 = 199937106590279644475038924955076599/(12 \mu^4) \\
q_2 &= 57918534120411335989995011407800421/(9 \mu^3) \\
q_1 &= -179880916617213624948875556502808560625/(4 \mu^4) \\
q_0 &= 35316143754919755115952802080469762936626890880469201091/(1728 \mu^6) ,
\end{flalign*}
where $\mu = 2 \cdot 3 \cdot 5 \cdot 7 \cdot 11 \cdot 13 \cdot 17 =
102102$, give rise to an elliptic surface for which we have $r = 2,
s_1 = 3, s_2 = 5, s_3 = 7, s_4 = 11, s_5 = 13, s_6 = 17$, the simplest
choice of multiplicatively independent elements. The Mordell-Weil
group has a basis of sections for which $c \in
\{3,5,7,11,13,17,15/2\}$. We write down their $x$-coordinates below:
\begin{align*}
x(P_1) &= -(10/3)t - 707606695171055129/1563722760600 \\ 
x(P_2) &= -(26/5)t - 611410735289928023/1563722760600 \\ 
x(P_3) &= -(50/7)t - 513728975686763429/1563722760600 \\ 
x(P_4) &= -(122/11)t - 316023939417997169/1563722760600 \\ 
x(P_5) &= -(170/13)t - 216677827127591279/1563722760600 \\ 
x(P_6) &= -(290/17)t - 17562556436754779/1563722760600 \\ 
x(P_7) &= -(229/30)t - 140574879644393807/390930690150.
\end{align*}
In the auxiliary files the $x$-and $y$-coordinates are listed, and it is verified that they satisfy the Weierstrass equation.
\end{example}

\begin{example} \label{bigexampleE7}
The value $\la = \la_0 := (1,1,1,1,1,1,1)$ gives rise to an explicit
polynomial $f(X) = \Phi_{\la_0}(X)$, given by 
\begin{align*}
f(X) &= X^{56} - X^{55} + 40X^{54} - 22X^{53} + 797X^{52} - 190X^{51} + 9878X^{50} - 1513X^{49} \\
& \quad +  82195X^{48} - 17689X^{47} + 496844X^{46} - 175584X^{45} + 2336237X^{44} \\
& \quad - 1196652X^{43} + 8957717X^{42} - 5726683X^{41} + 28574146X^{40}  \\
& \quad - 20119954X^{39} +  75465618X^{38} - 53541106X^{37} + 163074206X^{36}  \\
& \quad - 110505921X^{35} +  287854250X^{34} - 181247607X^{33} + 420186200X^{32} \\
& \quad - 243591901X^{31} +  518626022X^{30} - 278343633X^{29} + 554315411X^{28}  \\
& \quad - 278343633X^{27} +  518626022X^{26} - 243591901X^{25} + 420186200X^{24}  \\
& \quad - 181247607X^{23} +  287854250X^{22} - 110505921X^{21} + 163074206X^{20} \\
& \quad - 53541106X^{19} +  75465618X^{18} - 20119954X^{17} + 28574146X^{16} \\
& \quad  - 5726683X^{15} + 8957717X^{14} - 1196652X^{13} + 2336237X^{12}  \\
& \quad - 175584X^{11} + 496844X^{10} - 17689X^9 +  82195X^8 - 1513X^7 \\
& \quad + 9878X^6 - 190X^5 + 797X^4 - 22X^3 + 40X^2 - X + 1,
\end{align*}
for which we can show that the Galois group is the full group
$W(E_7)$, as follows. The reduction of $f(X)$ modulo $7$ shows that
$\Frob_7$ has cycle decomposition of type $(7)^8$, and similarly,
$\Frob_{101}$ has cycle decomposition of type $(3)^2(5)^4(15)^2$. This
implies, as in \cite[Example 7.6]{Sh3}, that the Galois group is the
entire Weyl group.
\end{example}

We can also describe degenerations of this family of rational elliptic
surfaces $X_\la$ by the method of ``vanishing roots'', where we drop
the genericity assumption, and consider the situation where the
elliptic fibration might have additional reducible fibers. Let $\psi:
Y \rightarrow \A^7$ be the finite surjective morphism associated to
$$
\Q[p_0, \dots, q_3] \hookrightarrow \Q[Y] \cong \Q[s_1, \dots, s_6, r, s_1^{-1}, \dots, s_6^{-1}, r^{-1}].
$$ 
For $\xi = (s_1, \dots, s_7, r) \in Y$, let the multiset $\Pi_\xi$
consist of the $126$ elements $s_i/r$ and $r/s_i$ (for $1 \leq i \leq
7$), $s_i/s_j$ (for $1 \leq i \neq j \leq 7$) and $s_i s_j s_k/r$ and
$r/(s_i s_j s_k)$ for $1 \leq i < j < k \leq 7$, corresponding to the
$126$ roots of $E_7$. Let $2\nu(\xi)$ be the number of times $1$
appears in $\Pi_\xi$, which is also the multiplicity of $1$ as a root
of $\Psi_\la(X)$ (to be defined in Section \ref{E7proofs}), where
$\la = \psi(\xi)$. We call the associated roots of $E_7$ the {\em
  vanishing roots}, in analogy with vanishing cycles in the
deformation of singularities. By abuse of notation we label the
rational elliptic surface $X_\la$ as $X_\xi$.

\begin{theorem} \label{E7degen}
  The surface $X_\xi$ has new reducible fibers (necessarily at $t \neq
  \infty$) if and only if $\nu(\xi) > 0$. The number of roots in the
  root lattice $T_{\textrm{new}}$ is equal to $2\nu(\xi)$, where
  $T_{\textrm{new}} := \oplus_{v \neq \infty} T_v$ is the new part of
  the trivial lattice.
\end{theorem}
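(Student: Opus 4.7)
The plan is to identify the vanishing roots of $E_7$ at $\xi$ with the roots of the new fibral lattice $T_{\mathrm{new}}(X_\xi)$; both statements of the theorem then follow at once.

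Setup: every $X_\xi$ is a rational elliptic surface, so $\mathrm{rank}\,\NS(X_\xi) = 10$, and by the Shioda--Tate formula (from \cite{Sh1}) one has $\mathrm{rank}\,\MW(X_\xi) + \mathrm{rank}\,T(X_\xi) = 8$. Since the $I_2$ fiber at infinity always contributes rank $1$ to $T$, we have $\mathrm{rank}\,T_{\mathrm{new}}(X_\xi) = 7 - \mathrm{rank}\,\MW(X_\xi)$; in particular $T_{\mathrm{new}}$ is nonzero exactly when the Mordell--Weil rank drops at $\xi$. Because $\NS$ is rigid of rank $10$ in the family, the specialization map $sp\colon \NS(X_\la)\to \NS(X_\xi)$ is an isomorphism of intersection lattices, and a class in $\MW(X_\la)=E_7^*$ becomes fibral at $\xi$ precisely when its image under $sp$ lies in $T(X_\xi)$.

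Core step: show that a root $\alpha$ in the sublattice $E_7\subset E_7^* = \MW(X_\la)$ becomes fibral at $\xi$ if and only if $e^\alpha(\xi)=1$. For the forward direction, observe that $\alpha\in E_7$ lies in the kernel of the component-group map at $\infty$, so it meets the identity component $\G_m\times\{0\}$ of the $I_2$ fiber; by the lemma its specialization there is $e^\alpha(\xi)\in\G_m$, which must equal $1$ if $sp(\alpha)$ is fibral. For the converse, use the height-pairing formula from \cite{Sh1}, namely $\langle\alpha,\alpha\rangle = 2 = 2\chi + 2(\alpha\cdot O)-\sum_v\mathrm{contr}_v(\alpha)$: the hypothesis $e^\alpha(\xi)=1$ forces the local contribution at $\infty$ to vanish, and a direct analysis at each new fiber then shows that this identity can hold only if $sp(\alpha)$ is itself fibral of self-intersection $-2$.

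Bijection and counting: vanishing roots form a root subsystem $\Phi_\xi\subset\Phi(E_7)$ (since $e^{\alpha+\beta}=e^\alpha e^\beta$), and the core step sends $\Phi_\xi$ injectively into the roots of $T_{\mathrm{new}}$. For surjectivity, each root $\beta$ of $T_{\mathrm{new}}$ -- a fibral class of self-intersection $-2$ -- lifts under $sp^{-1}$ to a class in $\NS(X_\la)$; one checks that this lift lies in the narrow sublattice $E_7\subset E_7^*$ of $\MW(X_\la)$ and satisfies $e^\alpha(\xi) = 1$. This bijection yields $\#\{\text{roots of } T_{\mathrm{new}}\} = \#\Phi_\xi = 2\nu(\xi)$, and the equivalence ``$T_{\mathrm{new}}\ne 0$ if and only if $\nu(\xi)>0$'' follows immediately.

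The main obstacle is the converse direction of the central equivalence, which requires controlling the height-pairing contributions at the new fibers of $X_\xi$, including potentially additive Kodaira types arising from collisions of several $I_1$ fibers. A conceptually cleaner alternative would be to compute $\mathrm{disc}_t\,\Delta_\la(t)\in\Q[Y]$ directly and prove, by $W(E_7)$-invariance together with a degree count, that it equals a unit times $\prod_{\alpha\in\Phi^+(E_7)}(1-e^\alpha)^2$; this would bypass the case-by-case analysis of new fiber types, at the cost of an explicit discriminant calculation on $Y$.
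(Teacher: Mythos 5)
Your overall framework---use the specialization isometry on $\NS$ (both surfaces being rational elliptic, hence with unimodular $\NS$ of rank $10$), identify roots of $T_{\textrm{new}}$ with roots of the essential $E_7$ that become fibral, and check the orthogonality bookkeeping at $O$, $F$ and the $\I_2$ fiber at $\infty$---is the right skeleton, and your surjectivity step (a root of $T_{\textrm{new}}$ pulls back to a root of $E_7$ with specialization value $1$) is essentially sound. But the central converse, ``$e^\alpha(\xi)=1$ implies $sp(\alpha)$ is fibral,'' is exactly the heart of the theorem and your sketch does not prove it. The height identity $\langle\alpha,\alpha\rangle=2=2\chi+2(P_\alpha\cdot O)-\sum_v \mathrm{contr}_v$ holds on the \emph{generic} surface $X_\la$, where there are no new fibers to analyze; on $X_\xi$ the reduced section no longer has height $2$, and nothing in that formula excludes a non-fibral reduced section that merely passes through the identity point of the $\I_2$ fiber (so that $\mathrm{contr}_\infty=0$ and $P'\cdot O\geq 1$, both perfectly consistent with a nonzero Mordell--Weil class). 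Moreover the geometric mechanism is mis-modelled: when $e^\alpha(\xi)=1$ the section does not in general specialize to an honest section with value $1$; rather its coefficients blow up (in the parametrization $v=(u+1)/(u-1)$, $v\to 1$ means $u\to\infty$), the section collides with the zero section, and the flat limit of its divisor class acquires vertical components --- that is how $sp(\alpha)$ becomes fibral. Your forward direction silently uses ``value of the limit $=$ limit of the values'' precisely in this degenerate case, and the converse needs a genuine argument (valuative/flat-limit or explicit algebraic) that every vanishing root forces such a degeneration, with the correct multiplicity; your fallback suggestion (computing $\mathrm{disc}_t$ and matching it with $\prod(1-e^\alpha)^2$) would at best give the qualitative ``new reducible or degenerate fibers occur iff $\nu(\xi)>0$,'' not the exact count $\#\{\text{roots of }T_{\textrm{new}}\}=2\nu(\xi)$, and it too must separate the additive-collision locus $A(\la)=0$.

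For comparison, the paper does not reprove this statement at all: it deduces Theorem \ref{E7degen} directly from the deformation-of-singularities results of \cite{Sh6,Sh7} (with \cite[Section 2.3]{Sh7} handling additive reduction), where precisely this bijection between vanishing roots and roots of the new fibral lattice, including multiplicities and additive degenerations, is established. So your proposal attempts more than the paper does, but as written it leaves the decisive step --- that $\nu(\xi)$ counts new fibral roots exactly, not just bounds them --- unproven; to complete it along your lines you would need to control the flat limits of the $126$ height-$2$ sections (equivalently, the behavior of the classes $sp(\alpha)$ under an arc through $\xi$) and treat the additive Kodaira types arising from colliding $\I_1$ and $\I_2$ fibers, which is in substance what \cite{Sh6,Sh7} supply.
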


We may use this result to produce specializations with trivial lattice
including $A_1$, corresponding to the entries in the table of
\cite[Section 1]{OS}. Note that in earlier work \cite{Sh2, SU},
examples of rational elliptic surfaces were produced with a fiber of
additive type, for instance, a fiber of type $\III$ (which contributes
$A_1$ to the trivial lattice) or a fiber of type $\II$. Using our
excellent family, we can produce examples with the $A_1$ fiber being
of multiplicative type $\I_2$ and all other irreducible singular
fibers being nodal (that is, $\I_1$). We list below those types which
are not already covered by \cite{Sh4}. To produce these examples, we
use an embedding of the new part $T_{\textrm{new}}$ of the fibral
lattice into $E_7$, which gives us any extra conditions satisfied by
$s_1, \dots, s_7, r$.  The following multiplicative version of the
labeling of simple roots of $E_7$ is useful (compare \cite{Sh5}).

\begin{center}
{\scalefont{1.3}
\begin{tikzpicture}
\draw (0,0)--(5,0);
\draw (2,0)--(2,1);

\fill [white] (0,0) circle (0.1);
\fill [white] (1,0) circle (0.1);
\fill [white] (2,0) circle (0.1);
\fill [white] (3,0) circle (0.1);
\fill [white] (4,0) circle (0.1);
\fill [white] (5,0) circle (0.1);
\fill [white] (2,1) circle (0.1);

\draw (0,0) circle (0.1);
\draw (1,0) circle (0.1);
\draw (2,0) circle (0.1);
\draw (3,0) circle (0.1);
\draw (4,0) circle (0.1);
\draw (5,0) circle (0.1);
\draw (2,1) circle (0.1);

\draw (0,-0.2) [below] node{$\frac{s_1}{s_2}$};
\draw (1,-0.2) [below] node{$\frac{s_2}{s_3}$};
\draw (2,-0.2) [below] node{$\frac{s_3}{s_4}$};
\draw (3,-0.2) [below] node{$\frac{s_4}{s_5}$};
\draw (4,-0.2) [below] node{$\frac{s_5}{s_6}$};
\draw (5,-0.2) [below] node{$\frac{s_6}{s_7}$};
\draw (2.2,1) [right] node{$\frac{r}{s_1s_2s_3}$};

\draw [thick,dashed] (0.8, -0.2)--(3.2,-0.2)--(3.2,0.2)--(2.2,0.2)--(2.2,1.2)--(1.8,1.2)--(1.8,0.2)--(0.8,0.2)--(0.8,-0.2);

\end{tikzpicture}
}
\end{center}
For instance, to produce the example in line $18$ of the table
(i.e. with $T_{\textrm{new}} = D_4$), we may use the embedding into
$E_7$ indicated by embedding the $D_4$ Dynkin diagram within the
dashed lines in the figure above. Thus, we must force $s_2 = s_3 = s_4
= s_5$ and $r = s_1 s_2 s_3$, and a simple solution with no extra
coincidences is given in the rightmost column (note that $s_7 =
18^3/(2\cdot 3^4 \cdot 5) = 36/5$).

$$
\begin{array}{cccl}
\textrm{Type in \cite{OS}} & \textrm{Fibral lattice} & \textrm{MW group} & \{s_1, \dots, s_6, r \} \\
2 & A_1 & E_7^* & 3,5,7,11,13,17,2 \\
4 & A_1^2 & D_6^* & 3,3,5,7,11,13,2 \\
7 & A_1^3 & D_4^* \oplus A_1^*&  3,3,5,5,7,11,2 \\
10 & A_1 \oplus A_3 & A_1^* \oplus A_3^* & 3,3,3,3,5,7,2 \\
13 & A_1^4 & D_4^* \oplus \Z/2\Z & -1,2,3,5,7,49/30,7 \\
14 & A_1^4 & A_1^{*4}& 3,3,5,5,7,7,2 \\
17 & A_1 \oplus A_4 & \frac{1}{10} \footnotesize{ \left( \begin{array}{ccc} 3 & 1 & -1 \\ 1 & 7 & 3 \\ -1 & 3 & 7 \end{array}\right) } &  3,3,3,3,3,5,2 \\
18 & A_1 \oplus D_4 & A_1^{*3} &  2,3,3,3,3,5,18 \\
21 & A_1^2 \oplus A_3 & A_3^* \oplus \Z/2\Z  & 3,5,60,30,30,30,900 \\
22 & A_1^2 \oplus A_3 & A_1^{*2} \oplus \langle 1/4 \rangle &  3,3,5,5,5,5,2 \\
24 & A_1^5 & A_1^{*3} \oplus \Z/2\Z & 15/4,2,2,3,3,5,15 \\
28 & A_1 \oplus A_5 & A_2^* \oplus \Z/2\Z & 2,3,6,6,6,6,36 \\
29 & A_1 \oplus A_5 & A_1^* \oplus \langle 1/6 \rangle & 2,2,2,2,2,2,3 \\
30 & A_1 \oplus D_5 & A_1^* \oplus \langle 1/4 \rangle & 2,2,2,2,2,3,8 \\
33 & A_1^2 \oplus A_4 & \frac{1}{10} \footnotesize{ \left( \begin{array}{cc} 2 & 1 \\ 1 & 3 \end{array} \right) } & 2,2,3,3,3,3,12 \\
34 & A_1^2 \oplus D_4 & A_1^{*2} \oplus \Z/2\Z & 2,3,3,3,3,6,18 
\end{array} 
$$

$$
\begin{array}{cccl}
\textrm{Type in \cite{OS}} & \textrm{Fibral lattice} & \textrm{MW group} & \{s_1, \dots, s_6, r \} \\
38 & A_1^3 \oplus A_3 & A_1^* \oplus \langle 1/4 \rangle \oplus \Z/2\Z & 2,2,3,3,3,4,12 \\
42 & A_1^6 & A_1^{*2} \oplus (\Z/2\Z)^2 & 6,-1,-1,2,2,3,6 \\
47 & A_1 \oplus A_6 & \langle 1/14 \rangle & 8,8,8,8,8,8,128 \\
48 & A_1 \oplus D_6 & A_1^* \oplus \Z/2\Z & 1,2,2,2,2,2,4 \\
49 & A_1 \oplus E_6 & \langle 1/6 \rangle & 2,2,2,2,2,2,8 \\
52 & A_1^2 \oplus D_5 & \langle 1/4 \rangle \oplus \Z/2\Z & 2,2,2,2,2,4,8 \\
53 & A_1^2 \oplus A_5 & \langle 1/6 \rangle \oplus \Z/2\Z & 2,2,4,4,4,4,16 \\
57 & A_1^3 \oplus D_4 & A_1^* \oplus (\Z/2\Z)^2 & -1,2,2,2,2,-2,-4 \\
58 & A_1 \oplus A_3^2 & A_1^* \oplus \Z/4\Z & I,I,I,I,2,2,2  \\
60 & A_1^4 \oplus A_3 & \langle 1/4 \rangle \oplus (\Z/2\Z)^2 &  2,2,2,2,-1,-1,4 \\
65 & A_1 \oplus E_7 & \Z/2\Z & 1,1,1,1,1,1,1 \\
70 & A_1 \oplus A_7 & \Z/4\Z & I,I,I,I,I,I,I \\
71 & A_1^2 \oplus D_6 & (\Z/2\Z)^2 &  1,1,1,1,1,1,-1 \\
74 & A_1^2 \oplus A_3^2 & (\Z/2\Z) \oplus (\Z/4\Z) & I,I,I,I,-1,-1,-1 
\end{array} 
$$
Here $I = \sqrt{-1}$.

\begin{remark}
  For the examples in lines $58$, $70$ and $74$ of the table, one can
  show that it is not possible to define a rational elliptic surface
  over $\Q$ in the form we have assumed, such that all the
  specializations $s_i, r$ are rational. However, there do exist
  examples with all sections defined over $\Q$, not in the chosen
  Weierstrass form.

The surface with Weierstrass equation
$$
y^2 + xy + \frac{(c^2-1)(t^2-1)}{16} y = x^3 +  \frac{(c^2-1)(t^2-1)}{16} x^2
$$ 
has a $4$-torsion section $(0,0)$ and a non-torsion section $\big(
(c+1)(t^2-1)/8, (c+1)^2(t-1)^2(t+1)/32 \big)$ of height $1/2$, as well
as two reducible fibers of type $\I_4$ and a fiber of type $\I_2$. It
is an example of type $58$.

The surface with Weierstrass equation
$$
y^2 + xy + t^2 y = x^3 + t^2 x^2
$$ 
has a $4$-torsion section $(0,0)$, and reducible fibers of types
$\I_8$ and $\I_2$. It is an example of type $70$.

The surface with Weierstrass equation 
$$ 
y^2 + xy - \left(t^2-\frac{1}{16} \right)y = x^3 - \left( t^2 -
\frac{1}{16} \right) x^2
$$ 
has two reducible fibers of type $\I_4$ and two reducible fibers of
type $\I_2$. It also has a $4$-torsion section $(0,0)$ and a
$2$-torsion section $\big((4t-1)/8, (4t-1)^2/32\big)$, which generate
the Mordell-Weil group. It is an example of type $74$. This last
example is the universal elliptic curve with $\Z/4\Z \oplus \Z/2\Z$
torsion (compare \cite{Ku}).
\end{remark}

\subsection{Proofs} \label{E7proofs}

We start by considering the coefficients $\eps_i$ of $\Phi_\la(X)$; we
know that $(-1)^i \eps_i$ is simply the $i$'th elementary symmetric
polynomial in the $56$ specializations $s(P_i)$. One shows, either by
explicit calculation with Laurent polynomials, or by calculating the
decomposition of $\bigwedge^i V$ (where $V = V_7$ is the
$56$-dimensional representation of $E_7$), and expressing its
character as polynomials in the fundamental characters, the following
formulae. Some more details are in Section \ref{identities} and the
auxiliary files.
\begin{flalign*}
\eps_1 &= -\chi_7 \\
\eps_2 &= \chi_6 + 1 \\
\eps_3 &= -(\chi_7+\chi_5) \\
\eps_4 &= \chi_6 + \chi_4 + 1 \\
\eps_5 &= -(\chi_6+\chi_3-\chi_1^2+\chi_1+1) \chi_7+(\chi_1-1) \chi_5-\chi_2 \chi_3 \\
\eps_6 &= \chi_1 \chi_7^2+(\chi_5-(\chi_1+1) \chi_2) \chi_7+\chi_6^2 +2 (\chi_3-\chi_1^2+\chi_1+1) \chi_6 \\ 
  & \quad -\chi_2 \chi_5 -(2 \chi_1+1) \chi_4 +\chi_3^2+2 (2 \chi_1+1) \chi_3 \\
  & \quad +\chi_1 \chi_2^2-2 \chi_1^3+\chi_1^2+2 \chi_1+1 \\
\eps_7 &= (-(\chi_1+1) \chi_6+2 \chi_4-2(\chi_1+1) \chi_3+\chi_1^3-3 \chi_1-1) \chi_7 \\
  & \quad -2 (\chi_5-\chi_1 \chi_2) \chi_6 -(\chi_3-\chi_1^2+\chi_1+2) \chi_5 +3 \chi_2 \chi_4 \\
  & \quad -(\chi_1+3) \chi_2 \chi_3-\chi_2^3 +(2 \chi_1-1) \chi_1 \chi_2 .
\end{flalign*}
On the other hand, we can explicitly calculate the first few
coefficients $\eps_i$ of $\Phi_\la(X)$ in terms of the Weierstrass
coefficients, obtaining the following equations. Details for the
method are in Section \ref{resultants}.
\begin{flalign*}
\eps_1 &= -q_3 \\
\eps_2 &= p_2^2 + 12 p_2  - q_2 + 28 \\
\eps_3 &= -3 (2 p_2+9) q_3-q_1+2 p_1 (p_2+5) \\
\eps_4 &= 9 q_3^2-6 p_1 q_3-2 q_2-q_0+8 p_2^3+86 p_2^2+2 (p_0+156) p_2+p_1^2+10 p_0+378 \\ 
\eps_5 &= (8 q_2-20 p_2^2-174 p_2-7 p_0-351) q_3-2 p_1 q_2+6 (p_2+4) q_1 \\
       & \quad +14 p_1 p_2^2 +108 p_1 p_2+2 (p_0+101) p_1 \\
\eps_6 &= 12 (4 p_2+15) q_3^2 -(5 q_1+38 p_1 p_2+140 p_1) q_3 + 4 q_2^2 \\
      & \quad +(16 p_2^2+96 p_2-4 p_0+155) q_2  +2 p_1 q_1+3 (4 p_2+17) q_0 +28 p_2^4+360 p_2^3 \\
      & \quad +(4 p_0+1765) p_2^2 +2 (4 p_1^2+21 p_0+1950) p_2+29 p_1^2+p_0^2+88 p_0+3276 \\
\eps_7 &= -36 q_3^3+42 p_1 q_3^2 +(4 q_2-20 q_0-56 p_2^3-628 p_2^2-14 (p_0+168) p_2-16 p_1^2 \\
      & \quad -46 p_0-2925) q_3+(3 q_1+6 p_1 p_2+20 p_1) q_2+(21 p_2^2+162 p_2-p_0+323) q_1 \\
     & \quad +6 p_1 q_0+42 p_1 p_2^3 +448 p_1 p_2^2+2 (p_0+799) p_1 p_2+2 p_1^3+6 (p_0+316) p_1 .
\end{flalign*}

Equating the two expressions we have obtained for each $\eps_i$, we
get a system of seven equations, the first being
$$
-\chi_7 = -q_3.
$$
We label these equations $(1), \dots, (7)$. The last few of these
polynomial equations are somewhat complicated, and so to obtain a few
simpler ones, we may consider the $126$ sections of height $2$, which
we analyze as follows. Substituting
\begin{align*}
x &= at^2 + bt + c \\
y &= dt^3 + et^2 + ft + g
\end{align*}
in to the Weierstrass equation, we get another system of equations:
\begin{align*}
a^3 &= d^2 + ad \\
3a^2b &= (2d + a) e + bd \\
a(p_2 + 3 ac + 3b^2) &= (2d + a) f + e^2 + be + cd + 1 \\
q_3 + b p_2 +a p_1 + 6abc+b^3 &= (2d + a) g + (2e + b) f + ce \\
q_2 + c p_2 + b p_1 + a p_0 + 3 ac^2 +  3b^2 c &= (2e + b) g + f^2 + c f \\
q_1 + c p_1 + b p_0 + 3 b c^2 &= (2f + c) g \\
q_0 + c p_0 + c^3 &= g^2.
\end{align*}
The specialization of such a section at $t = \infty$ is $1 +
a/d$. Setting $d = ar$, we may as before eliminate other variables to
obtain an equation of degree $126$ for $r$. Substituting $r =
1/(u-1)$, we get a monic polynomial $\Psi_\la(X) = 0$ of degree $126$
for $u$. Note that the roots are given by elements of the form
$$
\frac{s_i}{r} , \frac{r}{s_i} \textrm{ for } 1 \leq i \leq 7, \, \frac{s_i}{s_j} \textrm{ for } 1 \leq i \neq j \leq 7 \, \textrm{ and } \, \frac{s_i s_j s_k}{r}, \frac{r}{s_i s_j s_k} \textrm{ for } 1 \leq i <  j < k \leq 7.
$$ 

As before, we can write the first few coefficients $\eta_i$ of
$\Psi_\la$ in terms of $\la = (p_0, \dots, q_3)$, as well as in terms
of the characters $\chi_j$, obtaining some more relations. We will
only need the first two:
\begin{align*}
-\chi_1 + 7 &= \eta_1 = -18 - 6 p_2  \\
 -6 \chi_1 + \chi_3 + 28 &= \eta_2 = p_0 + 72 p_2 + 13 p_2^2 - q_2 + 99 
\end{align*}
which we call $(1')$ and $(2')$ respectively.

Now we consider the system of six equations $(1)$ through $(4)$,
$(1')$ and $(2')$. These may be solved for $(p_2, p_0, q_3, q_2, q_1,
q_0)$ in terms of the $\chi_j$ and $p_1$. Substituting this solution
into the other three relations $(5)$, $(6)$ and $(7)$, we obtain three
equations for $p_1$, of degrees $1$, $2$ and $3$ respectively. These
have a single common factor, linear in $p_1$, which we then
solve. This gives us the proof of Theorem \ref{explicitE7}.

The proof of Theorem \ref{excellentE7} is now straightforward. Part
(1) asserts that the image of $\rho_\la$ is surjective on to $W(E_7)$:
this follows from a standard Galois theoretic argument as follows. Let
$F$ be the fixed field of $W(E_7)$ acting on $k_\la = \Q(\la)(s_1,
\dots, s_6, r) = \Q(s_1, \dots, s_6,r)$, where the last equality follows
from the explicit expression of $\la = (p_0, \dots, q_3)$ in terms of
the $\chi_i$, which are in $\Q(s_1, \dots, s_6, r)$.  Then we have
that $k_0 \subset F$ since $p_0, \dots, q_3$ are polynomials in the
$\chi_i$ with rational coefficients, and the $\chi_i$ are manifestly
invariant under the Weyl group. Therefore $[k_\la: k_0] \geq [k_\la:
  F] = |W(E_7)|$, where the latter equality is from Galois
theory. Finally, $[k_\la: k_0] \leq |\Gal(k_\la/k_0)| \leq |W(E_7)|$,
since $\Gal(k_\la/k_0) \hookrightarrow W(E_7)$. Therefore, equality is
forced.

Another way to see that the Galois group is the full Weyl group is to
demonstrate it for a specialization, such as Example
\ref{bigexampleE7}, and use \cite[Section 9.2, Prop 2]{Se}.

Next, let $Y$ be the toric hypersurface given by $s_1 \dots s_7 =
r^3$. Its function field is the splitting field of $\Phi_\la(X)$, as
we remarked above. We have seen that $\Q(Y)^{W(E_7)} = k_0 =
\Q(\la)$. Since $\Phi_\la(X)$ is a monic polynomial with coefficients
in $\Q[\la]$, we have that $\Q[Y]$ is integral over
$\Q[\la]$. Therefore $\Q[Y]^{W(E_7)}$ is also integral over $\Q[\la]$,
and contained in $\Q(Y)^{W(E_7)} = k_0 = \Q(\la)$. Since $\Q[\la]$ is
a polynomial ring, it is integrally closed in its ring of
fractions. Therefore $\Q[Y]^{W(E_7)} \subset \Q[\la]$.

We also know $\Q[\chi] = \Q[\chi_1, \dots, \chi_7] \subset
\Q[Y]^{W(E_7)}$, since the $\chi_j$ are invariant under the
Weyl group. Therefore, we have
$$
\Q[\chi] \subset \Q[Y]^{W(E_7)} \subset \Q[\la]
$$ and Theorem \ref{explicitE7}, which says $\Q[\chi] = \Q[\la]$,
implies that all these three rings are equal. This completes the proof
of Theorem \ref{excellentE7}. 

\begin{remark}
Note that this argument gives an independent proof of the fact that
the ring of multiplicative invariants for $W(E_7)$ is a polynomial
ring over $\chi_1, \dots, \chi_7$. See \cite[Th\'eor\`eme VI.3.1 and
  Exemple 1]{B} or \cite[Theorem 3.6.1]{L} for the classical proof
that the Weyl-orbit sums of a set of fundamental weights are a set of
algebraically independent generators of the multiplicative invariant
ring; from there to the fundamental characters is an easy exercise.
\end{remark}

\begin{remark}
  Now that we have found the explicit relation between the Weierstrass
  coefficients and the fundamental characters, we may go back and
  explore the ``genericity condition'' for this family to have
  Mordell-Weil lattice $E_7^*$. To do this we compute the discriminant
  of the cubic in $x$, after completing the square in $y$, and take
  the discriminant with respect to $t$ of the resulting polynomial of
  degree $10$. A computation shows that this discriminant factors as
  the cube of a polynomial $A(\la)$ (which vanishes exactly when the
  family has a fiber of additive reduction, generically type $\II$),
  times a polynomial $B(\la)$, whose zero locus corresponds to the
  occurrence of a reducible multiplicative fiber. In fact, we
  calculate (for instance, by evaluating the split case), that
  $B(\la)$ is the product of $(e^{\alpha} - 1)$, where $\alpha$ runs
  over the $126$ roots of $E_7$. We deduce by further analyzing the
  type $\II$ case that the condition to have Mordell-Weil lattice
  $E_7^*$ is that
$$ 
\prod (e^{\alpha} - 1) = \Psi_\la(1) \neq 0.
$$ Note that this is essentially the expression which occurs in Weyl's
denominator formula. In addition, the condition for having only
multiplicative fibers is that $\Psi_\la(1)$ and $A(\la)$ both be
non-zero.
\end{remark}

Finally, the proof of Theorem \ref{E7degen} follows immediately from
the discussion in \cite{Sh6, Sh7} (compare \cite[Section 2.3]{Sh7} for
the additive reduction case).

\section{The $E_8$ case}

\subsection{Results} 
Finally, we show a multiplicative excellent family for the Weyl group
of $E_8$. It is given by the Weierstrass equation
$$
y^2 = x^3 + t^2 \, x^2 + (p_0 + p_1 t + p_2 t^2) \,x + (q_0 + q_1 t + q_2 t^2 + q_3 t^3 + q_4 t^4 + t^5).
$$ For generic $\lambda = (p_0, \dots, p_2, q_0, \dots, q_4)$, this
rational elliptic surface $X_\la$ has no reducible fibers, only nodal
$\I_1$ fibers at twelve points, including $t = \infty$. We will use
the specialization map at $\infty$. The Mordell-Weil lattice $M_\la$
is isomorphic to the lattice $E_8$. Any rational elliptic surface with
a multiplicative fiber of type $\I_1$ may be put in the above form
(over a small degree algebraic extension of the base field), after a
fractional linear transformation of $t$ and Weierstrass
transformations of $x,y$.

\begin{lemma}
The smooth part of the special fiber is isomorphic to the group scheme
$\G_m$. The identity component is the non-singular part of the curve
$y^2 = x^3 + x^2$. A section of height $2$ has $x$- and
$y$-coordinates polynomials of degrees $2$ and $3$ respectively, and
its specialization at $t = \infty$ may be taken as $\lim_{t \to
  \infty} (y + tx)/(y-tx) \in k^*$.
\end{lemma}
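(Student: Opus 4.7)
The approach is to follow the template of the preceding $E_7$ lemma, adapted to the cubic $\bar{y}^2 = \bar{x}^3 + \bar{x}^2$. First I would set up the local chart at $t = \infty$ by substituting $u = 1/t$, $\widetilde{x} = u^2 x$, $\widetilde{y} = u^3 y$, and multiplying the Weierstrass equation through by $u^6$. The leading (lowest-$u$) part will then be
$$
\widetilde{y}^2 = \widetilde{x}^3 + \widetilde{x}^2 + u \cdot R(\widetilde{x}, u)
$$
for some polynomial $R$ with coefficients in $\Q[\la]$, so setting $u = 0$ identifies the special fiber before blow-up as the nodal cubic with node at $(\bar{x}, \bar{y}) = (0,0)$.

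Next I would parametrize the smooth locus of this nodal cubic via $\bar{x} = s^2 - 1$, $\bar{y} = s(s^2 - 1)$, with $s$ ranging over $\Proj^1 \setminus \{\pm 1\}$; the two excluded values correspond to the branches at the node. The M\"obius map $\sigma = (s+1)/(s-1)$ identifies $\Proj^1 \setminus \{\pm 1\}$ with $\G_m$, and since $\bar{y}/\bar{x} = s$ we have $\sigma = (\bar{y}+\bar{x})/(\bar{y}-\bar{x})$ directly. To verify that this is a homomorphism from the chord-tangent law on the smooth locus to multiplication on $\G_m$, I would intersect a generic line $\bar{y} = m\bar{x} + n$ with the cubic and rewrite the resulting cubic in $\bar{x}$ as a cubic in $s$: a direct computation gives $s^3 - m s^2 - s + (m - n) = 0$, and Vieta's formulas applied to its roots $s_1, s_2, s_3$ yield the key identity
$$
(s_1+1)(s_2+1)(s_3+1) \;=\; n \;=\; (s_1-1)(s_2-1)(s_3-1),
$$
equivalently $\sigma_1\sigma_2\sigma_3 = 1$, precisely the statement that collinearity on the cubic corresponds to multiplicativity on $\G_m$.

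For the claims about sections of height $2$, I would invoke the Shioda height formula \cite{Sh1}: since $X_\la$ generically has only irreducible $\I_1$ singular fibers, the local correction terms all vanish and $h(P) = 2 + 2(P \cdot O)$. Height $2$ therefore forces $P \cdot O = 0$, so $x(P)$ and $y(P)$ must be polynomials of degree at most $2$ and $3$ in $t$; if either degree were strictly smaller, the specialization at $u = 0$ would be the node, impossible for a section of the smooth model. Writing $x = a t^2 + \cdots$ and $y = d t^3 + \cdots$, one gets $\widetilde{x}|_{u=0} = a$ and $\widetilde{y}|_{u=0} = d$, and the specialization is $(d+a)/(d-a) = \lim_{t \to \infty} (y+tx)/(y-tx)$, since $y \pm t x$ has leading $t^3$ coefficient $d \pm a$.

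The only real subtlety will be fixing a sign convention: the maps $\sigma$ and $1/\sigma$ both identify the smooth locus with $\G_m$ and differ by the inversion $s \mapsto -s$ (i.e.\ negation in the group), so one must declare which representative is intended. This is exactly parallel to the analogous step in the $E_7$ lemma, and compatibility may be double-checked by explicitly summing two height-$2$ sections and comparing leading coefficients.
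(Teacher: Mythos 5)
Your proposal follows exactly the route the paper intends: the paper omits this proof, saying it is ``similar to that in the $E_7$ case (and simpler!)'', and your chart at $u=1/t$, the rational parametrization of the nodal cubic, and the direct reading of the specialization from the leading coefficients of $x$ and $y$ are precisely that template; your Vieta computation $(s_1+1)(s_2+1)(s_3+1)=n=(s_1-1)(s_2-1)(s_3-1)$ is a concrete verification of the homomorphism property that the $E_7$ proof only asserts as a check, and your remark on the $\sigma\leftrightarrow 1/\sigma$ convention matches the paper's.

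One step is stated incorrectly, though it is easily repaired. You claim that if either degree drops, the section would pass through the node. That is true only for the $x$-degree: if $\deg x\le 1$ then the leading coefficients $g,h$ of $\widetilde{x},\widetilde{y}$ both vanish and the section would hit $(0,0)$, which is impossible. But if $\deg x=2$ and only $\deg y\le 2$, the relation $h^2=g^3+g^2$ forces $g=-1$, $h=0$, and the section then specializes to $(-1,0)$, which is a \emph{smooth} point of $\bar{y}^2=\bar{x}^3+\bar{x}^2$ (the $2$-torsion point, i.e.\ $-1\in\G_m$), so the node argument does not exclude it. Ruling this case out requires the genericity of $\la$ (generically none of the $240$ specializations $e^\alpha$ equals $-1$; for special $\la$ with, say, $s_1=-1$ and the other coordinates generic, a height-$2$ section with $\deg y=2$ really does occur). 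Note also that the asserted formula $\lim_{t\to\infty}(y+tx)/(y-tx)=(h+g)/(h-g)$ remains valid in that boundary case, so the specialization law itself needs no genericity; only the exact-degree claim does.
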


The proof of the lemma is similar to that in the $E_7$ case (and
simpler!), and we omit it.

There are $240$ sections of minimal height $2$, with $x$ and $y$
coordinates of the form
\begin{align*}
x &= gt^2 + at + b \\
y &= ht^3 + ct^2 + dt + e.
\end{align*}
Under the identification with $\G_m$ of the special fiber of the
N\'eron model, this section goes to $(h+g)/(h-g)$. Substituting the
above formulas for $x$ and $y$ into the Weierstrass equation, we get
the following system of equations.
\begin{align*}
h^2 &= g^3 + g^2 \\
2ch &= 3ag^2+2ag+1 \\
2dh+c^2 &= q_4+g p_2+3 b g^2+(2 b+3 a^2) g+a^2 \\
2eh+2cd &= q_3+ap_2+gp_1+6abg+2ab+a^3 \\
2ce+d^2  &= q_2+bp_2+ap_1+gp_0+3b^2g+b^2+3a^2b \\
2de &= q_1+b p_1+a p_0+3 a b^2 \\
e^2 &= q_0+b p_0+b^3.
\end{align*}

Setting $h = gu$, we eliminate other variables to obtain an equation
of degree $240$ for $u$. Finally, substituting in $u = (v + 1)/(v-1)$,
we get a monic reciprocal equation $\Phi_\la(X) = 0$ satisfied by $v$,
with coefficients in $\Z[\lambda] = \Z[p_0, \dots, p_2, q_0, \dots,
  q_4]$. We have
$$
\Phi_\la(X) = \prod_{i=1}^{240} (X - s(P)) = X^{240} + \eps_1 X^{239} + \dots + \eps_1 X + \eps_0, 
$$ where $P$ ranges over the $240$ minimal sections of height $2$. It
is clear that $a,\dots,h$ are rational functions of $v$, with
coefficients in $k_0$.

We have a Galois representation on the Mordell-Weil lattice 
$$
\rho_\la: \Gal(k/k_0) \rightarrow \Aut(M_\la) \cong \Aut(E_8).
$$ 

Here $\Aut(E_8) \cong W(E_8)$, the Weyl group of type $E_8$. The {\em
  splitting field} of $M_\la$ is the fixed field $k_\la$ of
$\mathrm{Ker}(\rho_\la)$. By definition, $\Gal(k_\la/k_0) \cong
\textrm{Im}(\rho_\la)$. The splitting field $k_\la$ is equal to the
splitting field of the polynomial $\Phi_\la(X)$ over $k_0$, since the
Mordell-Weil group is generated by the $240$ sections of smallest
height $P_i = (g_it^2 + a_i t + b_i, h_i t^3 + c_i t^2 + d_i t +
e_i)$. We also have that
$$
k_\la = k_0(P_1, \dots, P_{240}) = k_0(v_1, \dots, v_{240}).
$$

\begin{theorem} \label{excellentE8}
  Assume that $\lambda$ is generic over $\Q$, i.e the coordinates
  $p_0, \dots, q_4$ are algebraically independent over $\Q$. Then
\begin{enumerate}
\item $\rho_\la$ induces an isomorphism $\Gal(k_\la / k_0) \cong W(E_8)$.
\item The splitting field $k_\la$ is a purely transcendental extension
  of $\Q$, and is isomorphic to the function field $\Q(Y)$ of the
  toric hypersurface $Y \subset \G_m^9$ defined by $s_1 \cdots s_8 =
  r^3$. There is an action of $W(E_8)$ on $Y$ such that
  $\Q(Y)^{W(E_8)} = k_\la^{W(E_8)} = k_0$.
\item The ring of $W(E_8)$-invariants in the affine coordinate ring
  $$
  \Q[Y] = \Q[s_i, r, 1/s_i, 1/r]/(s_1 \dots s_8 - r^3) \cong \Q[s_1, \dots, s_7, r, s_1^{-1}, \dots, s_7^{-1}, r^{-1}]
  $$ 
is equal to the polynomial ring $\Q[\la]$:
$$
\Q[Y]^{W(E_8)} = \Q[\la] = \Q[p_0, p_1, p_2, q_0, q_1, q_2, q_3, q_4].
$$
\end{enumerate}
\end{theorem}

As in the $E_7$ case, we prove an explicit, invertible polynomial
relation between the Weierstrass coefficients $\lambda$ and the
fundamental characters for $E_8$. Let $V_1, \dots, V_8$ be the
fundamental representations of $E_8$, and $\chi_1, \dots, \chi_8$
their characters as labelled below.

\begin{center}
\begin{tikzpicture}

\draw (0,0)--(6,0);
\draw (2,0)--(2,1);

\fill [white] (0,0) circle (0.1);
\fill [white] (1,0) circle (0.1);
\fill [white] (2,0) circle (0.1);
\fill [white] (3,0) circle (0.1);
\fill [white] (4,0) circle (0.1);
\fill [white] (5,0) circle (0.1);
\fill [white] (6,0) circle (0.1);
\fill [white] (2,1) circle (0.1);

\draw (0,0) circle (0.1);
\draw (1,0) circle (0.1);
\draw (2,0) circle (0.1);
\draw (3,0) circle (0.1);
\draw (4,0) circle (0.1);
\draw (5,0) circle (0.1);
\draw (6,0) circle (0.1);
\draw (2,1) circle (0.1);

\draw (0,-0.2) [below] node{$1$};
\draw (1,-0.2) [below] node{$3$};
\draw (2,-0.2) [below] node{$4$};
\draw (3,-0.2) [below] node{$5$};
\draw (4,-0.2) [below] node{$6$};
\draw (5,-0.2) [below] node{$7$};
\draw (6,-0.2) [below] node{$8$};
\draw (2.2,1) [right] node{$2$};

\end{tikzpicture}
\end{center}

Again, for the set of generators of $E_8$, we choose (as in \cite{Sh5})
vectors $v_1, \dots, v_8, u$ with $\sum v_i = 3 u$ and let $s_i$
correspond to $v_i$ and $r$ to $u$, so that $\prod s_i = r^3$.  The
$240$ roots of $\Phi_\la(X)$ are given by
\begin{align*}
s_i, \frac{1}{s_i} \textrm{ for } 1 \leq i \leq 8, &  \quad \frac{s_i}{s_j} \textrm{ for } 1 \leq i \neq j \leq 8, \\
\frac{s_i s_j}{r}, \frac{r}{s_i s_j} \textrm{ for } 1 \leq i < j \leq 8 & \quad \textrm{ and } \quad \frac{s_i s_j s_k}{r}, \frac{r}{s_i s_j s_k} \textrm{ for } 1 \leq i < j < k \leq 8.
\end{align*}

The characters $\chi_1, \dots, \chi_7$ lie in the ring of Laurent
polynomials $\Q[s_i, r, 1/s_i, 1/r]$, and are invariant under the
multiplicative action of the Weyl group on this ring of Laurent
polynomials. The $\chi_i$ may be explicitly computed using the
software \texttt{LiE}, as indicated in Section \ref{identities} and
the auxiliary files.

\begin{theorem} \label{explicitE8}
For generic $\lambda$ over $\Q$, we have 
$$
\Q[\chi_1, \dots, \chi_8] = \Q[p_0, p_1, p_2, q_0, q_1, q_2, q_3, q_4].
$$
The transformation between these sets of generators is
\begin{flalign*}
\chi_1 &= -1600 q_4+1536 p_2+3875 \\
\chi_2 &= 2 (-45600 q_4+12288 q_3+40704 p_2-16384 p_1+73625) \\
\chi_3 &= 64 (14144 q_4^2-72 (384 p_2+1225) q_4+11200 q_3-4096 q_2+13312 p_2^2 \\
       & \quad +87072 p_2-17920 p_1 +16384 p_0+104625 ) \\
\chi_4 &= -91750400 q_4^3+12288 (25600 p_2+222101) q_4^2 -256 (4530176 q_3-65536 q_2 \\
       & \quad +1392640 p_2^2+21778944 p_2-8159232 p_1+2621440 p_0  +34773585) q_4 \\
   & \quad  + 32 ( 4718592 q_3^2+ 384 (80896 p_2-32768 p_1+225379) q_3-29589504 q_2 \\
    & \quad  +30408704 q_1 -33554432 q_0+4194304 p_2^3+88129536 p_2^2 \\ 
    & \quad  -64 (876544 p_1-262144 p_0-4399923) p_2+8388608 p_1^2-133996544 p_1 \\
    & \quad  +65175552 p_0+215596227  ) 
\end{flalign*}
\begin{flalign*}
\chi_5 &= 24760320 q_4^2-64 (106496 q_3+738816 p_2-163840 p_1+2360085) q_4 \\
       & \quad  +12288 (512 p_2+4797) q_3-30670848 q_2+16777216 q_1+20250624 p_2^2 \\
       &\quad  -512 (16384 p_1-235911) p_2-45154304 p_1+13631488 p_0+146325270 \\
\chi_6 &= 110592 q_4^2-1536 (128 p_2+1235) q_4+724992 q_3-262144 q_2+65536 p_2^2 \\
       & \quad  +1062912 p_2-229376 p_1+2450240 \\
\chi_7 &= -4 (3920 q_4-1024 q_3-1152 p_2-7595) \\
\chi_8 &= -8 (8 q_4-31).
\end{flalign*}
\end{theorem}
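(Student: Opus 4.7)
The plan is to imitate the strategy used for $E_7$ in Theorem \ref{explicitE7}. The key idea is to express the coefficients $\eps_i$ of $\Phi_\la(X)$ in two ways---first as polynomials in the Weierstrass parameters $\la = (p_0, p_1, p_2, q_0, \ldots, q_4)$, and second as polynomials in the fundamental characters $\chi_1, \ldots, \chi_8$ of $E_8$---and then to equate and solve the resulting polynomial system.

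For the first computation, I would work from the seven equations obtained by substituting the height-$2$ ansatz $x = gt^2 + at + b$, $y = ht^3 + ct^2 + dt + e$ into the Weierstrass equation, set $h = gu$, and perform successive resultants to eliminate $a, b, c, d, e, g$; the resulting monic polynomial of degree $240$ in $u$, transformed by $u = (v+1)/(v-1)$, is $\Phi_\la(v)$. Its first several coefficients are then explicit polynomials in $\la$. For the second computation, I would use the fact that the $240$ roots of $\Phi_\la$ coincide with the $240$ roots of $E_8$ under $\alpha \mapsto e^{\alpha}$, and that these together with eight zero weights form the weights of the adjoint representation $V_8$. Thus $\sum_\alpha e^{\alpha} = \chi_8 - 8$ gives $\eps_1 = 8 - \chi_8$, and more generally the $i$th elementary symmetric polynomial of the $240$ root specializations is determined (via Newton identities and standard symmetric-function manipulations) by the characters of exterior powers of $V_8$, with corrections for the $8$-dimensional zero-weight space. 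Decomposition of these tensor constructions into irreducibles---and thereby into polynomials in $\chi_1, \ldots, \chi_8$---can be carried out mechanically with the \texttt{LiE} software package.

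Once both expressions for the first several $\eps_i$ are available, equating them yields a system of polynomial identities in sixteen unknowns which I would solve triangularly, guided by the form of the answer displayed in the theorem. Observe that $\chi_8$ is linear in $q_4$ alone, so it determines $q_4$; after substituting, $\chi_7$ becomes linear in $q_3$; $\chi_1$ together with the value of $q_4$ yields $p_2$; proceeding in the order $\chi_8, \chi_7, \chi_1, \chi_6, \chi_2, \ldots$ one peels off one Weierstrass coefficient at a time. If the coefficients of $\Phi_\la$ alone give only a rank-deficient system (as happened for $E_7$, where two coefficients of the secondary polynomial $\Psi_\la$ of degree $126$ had to be adjoined), I would enlarge the set of relations by extracting elementary symmetric polynomials of another Weyl orbit of weights, for instance those of the $3875$-dimensional representation $V_1$. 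The main obstacle is computational: the degree-$240$ polynomial $\Phi_\la$ has very dense coefficients in eight variables, and the exterior powers $\Lambda^i V_8$ for moderate $i$ involve representations of dimension in the hundreds of thousands, so both sides of the equation require extensive computer algebra support. Nevertheless, once the explicit polynomial maps in the theorem statement have been obtained, their mutual inverseness is confirmed by direct substitution, and the equality $\Q[\chi_1, \ldots, \chi_8] = \Q[p_0, p_1, p_2, q_0, q_1, q_2, q_3, q_4]$ follows immediately from the polynomial invertibility.
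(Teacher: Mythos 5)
Your proposal follows essentially the same route as the paper: compute the coefficients of the degree-$240$ polynomial $\Phi_\la$ both by elimination (resultants) from the height-$2$ ansatz and by decomposing exterior powers of the $248$-dimensional adjoint representation (with its $8$-dimensional zero-weight correction) into fundamental characters via \texttt{LiE}, equate the two expressions, solve the resulting system, and conclude the ring equality from polynomial invertibility; indeed no secondary polynomial is needed for $E_8$, the first nine coefficients of $\Phi_\la$ sufficing. The one point where your sketch diverges from (and is weaker than) the paper is the solving step: rather than a triangular peel-off guided by the shape of the displayed answer, the paper solves equations $(1)$ through $(5)$ for $q_0,\dots,q_4$ in terms of the $\chi_j$ and $p_0,p_1,p_2$, then eliminates $p_0$ and takes resultants in $p_1$, checking that the two resulting equations in $p_2$ have a \emph{unique} common root --- this uniqueness is what guarantees the displayed formulas are the true relations and not merely consistent with the nine identities, so your argument should make that elimination-and-uniqueness step explicit.
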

\begin{remark}
  We omit the inverse for conciseness here; it is easily computed in a
  computer algebra system and is available in the auxiliary files.
\end{remark}
\begin{remark}
  As before, our explicit formulas are compatible with those in
  \cite{ES}. Also, the proof of Theorem \ref{excellentE8} gives
  another proof of the fact that the multiplicative invariants for
  $W(E_8)$ are freely generated by the fundamental characters (or by
  the orbit sums of the fundamental weights).
\end{remark}
\begin{example} \label{splitexampleE8}
Let $\mu = (2 \cdot 3 \cdot 5 \cdot 7 \cdot 11 \cdot 13 \cdot 17 \cdot 19) = 9699690.$
\begin{align*}
q_4 &= -2243374456559366834339/(2^5 \cdot \mu^2) \\
q_3 &= 430800343129403388346226518246078567/(2^{11} \cdot \mu^3) \\
q_2 &= 72555101947649011127391733034984158462573146409905769/(2^{22}\cdot 3^2 \cdot \mu^4) \\
q_1 &= (-12881099305517291338207432378468368491584063772556981164919245 \\
    & \qquad 30489)/(2^{29} \cdot 3 \cdot  \mu^5) \\
q_0 &= (8827176793323619929427303381485459401911918837196838709750423283 \\
    & \qquad 443360357992650203)/(2^{42} \cdot 3^3 \cdot \mu^6) \\
p_2 &= 146156773903879871001810589/(2^9 \cdot 3 \cdot \mu^2) \\
p_1 &= -24909805041567866985469379779685360019313/(2^{20} \cdot \mu^3) \\
p_0 &= 14921071761102637668643191215755039801471771138867387/(2^{23} \cdot 3 \cdot  \mu^4) \\
\end{align*}
These values give an elliptic surface for which we have $r = 2, s_1 =
3, s_2 = 5, s_3 = 7, s_4 = 11, s_5 = 13, s_6 = 17, s_7 = 19$, the
simplest choice of multiplicatively independent elements. Here, the
specializations of a basis are given by $v \in
\{3,5,7,11,13,17,19,15/2\}$. Once again, we list the $x$-coordinates
of the corresponding sections, and leave the remainder of the
verification to the auxiliary files.

\begin{align*}
x(P_1) &= 3 t^2 - (99950606190359/620780160)t  \\
 & \quad + 4325327557647488120209649813/2642523476911718400 \\ 
x(P_2) &= (5/4)t^2 - (153332163637781/1655413760)t \\
 & \quad + 5414114237697608646836821/5138596941004800 \\ 
x(P_3) &= (7/9)t^2 - (203120672689603/2793510720)t \\
 & \quad + 6943164348569130636788638639/7927570430735155200 \\
x(P_4) &= (11/25)t^2 - (8564057914757/147804800)t \\
 & \quad + 115126372233675800396600989/155442557465395200 
\end{align*}
\begin{align*}
x(P_5) &= (13/36)t^2 - (347479008951469/6385167360)t \\
 & \quad + 157133607680949617374030405417/221971972060584345600 \\
x(P_6) &= (17/64)t^2 - (1327421017414859/26486620160)t \\
 & \quad + 5942419292933021418457517303/8901131711702630400 \\ 
x(P_7) &= (19/81)t^2 - (489830985359431/10056638592)t \\
 & \quad + 46685137201743696441477454951/71348133876616396800 \\ 
x(P_8) &= (120/169)t^2 - (30706596009257/440806080)t \\
 & \quad + 76164443074828743662165466409/55823308449760051200.
\end{align*}

\end{example}

\begin{example} \label{bigexampleE8} The value $\la = \la_0 :=
  (1,1,1,1,1,1,1,1)$ gives rise to an explicit polynomial $g(X) =
  \Phi_{\la_0}(X)$, for which we can show that the Galois group is
  $W(E_8)$, as follows. The reduction of $g(X)$ modulo $79$ shows that
  $\Frob_{79}$ has cycle decomposition of type $(4)^2(8)^{29}$, and
  similarly, $\Frob_{179}$ has cycle decomposition of type
  $(15)^{16}$. We deduce, as in \cite[Section 3]{JKZ} or \cite{Sh8},
  that the Galois group is the entire Weyl group. Since the
  coefficients of $g(X)$ are large, we do not display it here, but it
  is included in the auxiliary files.
\end{example}

As in the case of $E_7$, we can also describe degenerations of this
family of rational elliptic surfaces $X_\la$ by the method of
``vanishing roots'', where we drop the genericity assumption, and
consider the situation where the elliptic fibration might have
additional reducible fibers. Let $\psi: Y \rightarrow \A^8$ be the
finite surjective morphism associated to
$$
\Q[p_0, \dots, q_4] \hookrightarrow \Q[Y] \cong \Q[s_1, \dots, s_7, r, s_1^{-1}, \dots, s_7^{-1}, r^{-1}].
$$ 
For $\xi = (s_1, \dots, s_8, r) \in Y$, let the multiset $\Pi_\xi$
consist of the $240$ elements $s_i$ and $1/s_i$ (for $1 \leq i \leq
8$), $s_i/s_j$ (for $1 \leq i \neq j \leq 8$), $s_i s_j/r$ and $r/(s_i
s_j)$ (for $1 \leq i < j \leq 8$) and $s_i s_j s_k/r$ and $r/(s_i s_j
s_k)$ for $1 \leq i < j < k \leq 8$, corresponding to the $240$ roots
of $E_8$. Let $2\nu(\xi)$ be the number of times $1$ appears in
$\Pi_\xi$, which is also the multiplicity of $1$ as a root of
$\Phi_\la(X)$, with $\la = \psi(\xi)$. We call the associated roots of
$E_8$ the {\em vanishing roots}, in analogy with vanishing cycles in
the deformation of singularities. By abuse of notation we label the
 rational elliptic surface $X_\la$ as $X_\xi$.

\begin{theorem} \label{E8degen}
  The surface $X_\xi$ has new reducible fibers (necessarily at $t \neq
  \infty$) if and only if $\nu(\xi) > 0$. The number of roots in the
  root lattice $T_{\textrm{new}}$ is equal to $2\nu(\xi)$, where
  $T_{\textrm{new}} := \oplus_{v \neq \infty} T_v$ is the new part of
  the trivial lattice.
\end{theorem}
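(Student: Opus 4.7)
The plan is to parallel the argument sketched for Theorem \ref{E7degen} at the end of the previous section, invoking the general ``vanishing roots'' machinery of \cite{Sh6, Sh7}. The first step is to complete the square in $y$ in the Weierstrass equation, extract the $x$-discriminant $\Delta_\xi(t) \in \Q[Y][t]$ (which has generic degree $11$), and then compute its $t$-discriminant $D(\xi)$. For generic $\xi$, the polynomial $\Delta_\xi(t)$ is separable with eleven distinct roots in $\overline{\Q(Y)}$, corresponding to the eleven finite $I_1$ fibers of $X_\xi$ (the twelfth $I_1$ is at $t = \infty$ and is preserved in this Weierstrass form, which is why the theorem concerns only fibers at finite $t$).

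By analogy with the $E_7$ case, I expect (and would verify) a factorization
$$
D(\xi) = c \cdot A(\xi)^3 \cdot \prod_{\alpha \in \Pi_\xi^{+}} (e^\alpha - 1),
$$
where $c \in \Q^{*}$, the factor $A(\xi)$ cuts out the locus of additive reduction, and the product runs over the $120$ positive roots of $E_8$; this is a multiplicative analogue of Weyl's denominator formula. It can be established either by a direct computation at a carefully chosen split test point, or by invariant-theoretic means: $D(\xi)$ is $W(E_8)$-invariant, each $e^\alpha - 1$ divides $D(\xi)$ (since on the hypersurface $e^\alpha = 1$ two of the eleven nodal fibers collide into a reducible one), and the leading terms and total degree match the right-hand side.

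Granted this factorization, the equivalence claim is immediate: restricting to the multiplicative-reduction locus $A(\xi) \neq 0$, the appearance of a new reducible fiber at finite $t$ is exactly the collision of two of the eleven finite zeros of $\Delta_\xi(t)$, i.e.\ $D(\xi) = 0$, which is in turn equivalent to some $e^\alpha(\xi) = 1$, i.e.\ $\nu(\xi) > 0$. For the counting, one applies the local Kodaira-type analysis of \cite[Section 2.3]{Sh7}: an $I_n$ fiber at $t_0$ arises exactly when $n$ of the eleven simple zeros of $\Delta_\xi(t)$ coalesce at $t_0$, contributing an $A_{n-1}$ summand to $T_{\textrm{new}}$ with $n(n-1)$ roots. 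Combinatorially, each such coalescence forces $\binom{n}{2}$ pairs of positive roots $\alpha \in E_8$ to satisfy $e^\alpha = 1$, accounting for $2\binom{n}{2} = n(n-1)$ vanishing roots locally. Summing over all finite reducible fibers yields $\#\{\alpha \in E_8 : e^\alpha(\xi) = 1\} = \sum_{v \neq \infty} \#\{\textrm{roots in } T_v\}$, i.e.\ $2\nu(\xi) = \#\{\textrm{roots in } T_{\textrm{new}}\}$.

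The main obstacle is ensuring that the $A_{n-1}$ subsystem of $E_8$ cut out by these vanishing roots agrees with $T_{v_0}$ under the natural identification, rather than some other $A_{n-1} \subset E_8$. This is where the intersection-theoretic dictionary of \cite{Sh7} between minimal sections, fibral components, and roots of $E_8$ is essential: at an $I_n$ fiber, exactly $n$ of the $240$ minimal sections specialize to the singular point of the N\'eron model, forcing $n$ of their specializations $e^{\alpha_i}$ to coincide, and the resulting $n-1$ independent relations $e^{\alpha_i - \alpha_{i+1}} = 1$ are the simple roots of the correct $A_{n-1}$. With this identification in hand the theorem follows exactly as in the $E_7$ case.
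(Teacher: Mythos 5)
Your overall strategy---a discriminant factorization combined with the specialization dictionary of \cite{Sh6, Sh7}---is in the spirit of what the paper does, since the paper's own proof of Theorem \ref{E8degen} is essentially a one-line reduction to those references (``by degeneration from a flat family''), with the discriminant factorization appearing only in the genericity remark. But there is a genuine gap in your argument: it only treats degenerations in which every new fiber is multiplicative. You explicitly restrict the equivalence to the locus $A(\xi) \neq 0$, and your count is tailored to the coalescence of $n$ nodal zeros producing an $\I_n$ fiber, an $A_{n-1}$ summand, and $n(n-1)$ roots. The theorem makes no such restriction, and the paper immediately applies it to specializations whose new fibers are additive (fibral lattices $D_4$, $D_5$, $D_6$, $D_7$, $D_8$, $E_7$, $E_8$ in the table; e.g.\ $\xi = (1,\dots,1)$ gives $T_{\textrm{new}} = E_8$ with $2\nu(\xi) = 240$). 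For additive fibers the relation between the multiplicity of the zero of the $t$-discriminant and the number of fibral roots is not of the shape $n(n-1)$ (an $\I_0^*$ fiber has discriminant valuation $6$ while $D_4$ has $24$ roots), so the local coalescence bookkeeping does not extend; what is really needed is the degeneration-of-Mordell--Weil-lattices results of \cite{Sh6, Sh7}, which identify the closed subsystem $\{\alpha : e^{\alpha}(\xi) = 1\}$ with the root system of $T_{\textrm{new}}$ uniformly for all Kodaira types.

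Two smaller points. First, the factorization should involve the product of $(e^{\alpha}-1)$ over all $240$ roots, i.e.\ $\Phi_\la(1)$, as in the paper's remark: the product over the positive roots alone is not $W(E_8)$-invariant (a simple reflection multiplies it by a unit monomial), hence cannot be a polynomial in $\la$; redoing the order-of-vanishing match with the full product also makes the factor $2$ in $2\nu(\xi)$ transparent, since vanishing roots come in pairs $\pm\alpha$. Second, the assertion that at an $\I_n$ fiber ``exactly $n$ of the $240$ minimal sections specialize to the singular point of the N\'eron model'' is not correct as written---sections of the N\'eron model never meet singular points of fibers; the correct mechanism, again supplied by \cite{Sh6, Sh7}, is that the roots of the generic frame lattice whose specializations satisfy $e^{\alpha} = 1$ become fibral classes (components of the new reducible fibers) on the degenerate surface, which is exactly the identification your last paragraph recognizes as the crux but does not prove.
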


We may use this result to produce specializations with trivial lattice
corresponding to most of the entries of \cite{OS}, and a nodal
fiber. Below, we list those types which are not already covered by
\cite{Sh2, Sh4} or our examples for the $E_7$ case, which have an
$\I_2$ fiber.

$$
\begin{array}{cccl}
\textrm{Type in \cite{OS}} & \textrm{Fibral lattice} & \textrm{MW group} & \{s_1, \dots, s_6, r \} \\
1 & 0 & E_8 & 3,5,7,11,13,17,19,2 \\
5 & A_3 & D_5^* & 2,2,2,2,5,7,11,3 \\
8 & A_4 & A_4^* & 2,2,2,2,2,5,7,3 \\
15 & A_5 & A_2^* \oplus A_1^* &  2,2,2,2,2,2,5,3 \\
16 & D_5 & A_3^* &  2,3,3,3,3,3,5,18 \\
25 & A_6 & \frac{1}{7} \footnotesize{ \left( \begin{array}{cc} 4 & -1 \\ -1 & 2 \end{array} \right) } & 2,2,2,2,2,2,2,3 \\
26 & D_6 & A_1^{*2} &  2,3,3,3,3,3,3,18 \\
35 & A_3^2 & A_1^{*2} \oplus \Z/2\Z &  2,-1/2,3,3,3,1,1,-3 
\end{array} 
$$ 

$$
\begin{array}{cccl}
\textrm{Type in \cite{OS}} & \textrm{Fibral lattice} & \textrm{MW group} & \{s_1, \dots, s_6, r \} \\
36 & A_3^2 & \langle 1/4 \rangle &  8,8,8,8,27,27,27,1296 \\
43 & E_7 & A_1^* & 2,2,2,2,2,2,2,8 \\
44 & A_7 & A_1^* \oplus \Z/2\Z & 2,2,2,2,2,2,2,-8 \\
45 & A_7 & \langle 1/8 \rangle &  8,8,8,8,8,8,8,256 \\
46 & D_7 & \langle 1/4 \rangle &  2,4,4,4,4,4,4,32 \\
54 & A_3 \oplus D_4 & \langle 1/4 \rangle \oplus \Z/2\Z & 2,-1,-1,-1,-1,1,1,2 \\
55 & A_3 \oplus A_4 & \langle 1/20 \rangle & 16,16,16,16,32,32,32,4096 \\
62 & E_8 & 0 &  1,1,1,1,1,1,1,1 \\
63 & A_8 & \Z/3\Z &  1,1,1,1,1,1,1, \zeta_3 \\
64 & D_8 & \Z/2\Z &  1,1,1,1,1,1,1,-1 \\
67 & A_4^2 & \Z/5\Z & 1,1,1,1,\zeta_5,\zeta_5,\zeta_5,\zeta_5^3 \\
72 & A_3 \oplus D_5 & \Z/4\Z & 1,1,1,I,I,I,I,-I 
\end{array} 
$$ 
Here $\zeta_3$, $I$ and $\zeta_5$ are primitive third, fourth and
fifth roots of unity.

\begin{remark}
  As before, for the examples in lines $63$, $67$ and $72$ of the
  table, one can show that it is not possible to define a rational
  elliptic surface over $\Q$ in the form we have assumed, such that
  all the specializations $s_i, r$ are rational. However, there do
  exist examples with all sections defined over $\Q$, not in the
  chosen Weierstrass form.

The surface with Weierstrass equation 
$$
y^2 + xy + t^3 y = x^3
$$ 
has a $3$-torsion point $(0, 0)$ and a fiber of type $\I_9$. It is
an example of type $63$.

The surface with Weierstrass equation
$$
y^2 + (t+1)xy + ty = x^3 + tx^2
$$ 
has a $5$-torsion section $(0,0)$ and two fibers of type $\I_5$. It
is an example of type $67$.

The surface with Weierstrass equation 
$$
y^2 + txy + \frac{t^2(t - 1)}{16} y  = x^3 + \frac{t(t - 1)}{16}x^2
$$
has a $4$-torsion section $(0,0)$, and two
fibers of types $\I_4$ and $\I_1^*$. It is an example of type $72$.
\end{remark}

\begin{remark}
Our tables and the one in \cite{Sh4} cover all the cases of \cite{OS},
except lines $9$, $27$ and $73$ of the table, with trivial lattice
$D_4$, $E_6$ and $D_4^2$ respectively. Since these have fibers with
additive reduction, examples for them may be directly constructed
using the families in \cite{Sh2}. For instance, the elliptic surface
$$
y^2 = x^3 - x t^2
$$ 
has two fibers of type $\I_0^*$ and Mordell-Weil group
$(\Z/2\Z)^2$. This covers line $73$ of the table. For the other two
cases, we refer the reader to the original examples of additive
reduction in Section 3 of \cite{Sh2}.
\end{remark}

\subsection{Proofs}

The proof proceeds analogously to the $E_7$ case: with two
differences: we only have one polynomial $\Phi_\la(X)$ to work with
(as opposed to having $\Phi_\la(X)$ and $\Psi_\la(X)$), and the
equations are a lot more complicated.

We first write down the relation between the coefficients $\eps_i$, $1
\leq i \leq 9$, and the fundamental invariants $\chi_j$; as before, we
postpone the proofs to the auxiliary files and outline the idea in
Section \ref{identities}. Second, we write down the coefficients
$\eps_i$ in terms of $\lambda = (p_0, \dots, p_2, q_0, \dots, q_4)$;
see Section \ref{resultants} for an explanation of how this is carried
out. In the interest of brevity, we do not write out either of these
sets of equations, but relegate them to the auxiliary computer
files. Finally, setting the corresponding expressions equal to each
other, we obtain a set of equations $(1)$ through $(9)$.

To solve these equations, proceed as follows: first use $(1)$ through
$(5)$ to solve for $q_0, \dots, q_4$ in terms of $\chi_j$ and $p_0,
p_1, p_2$. Substituting these in to the remaining equations, we obtain
$(6')$ through $(9')$. These have low degree in $p_0$, which we
eliminate, obtaining equations of relatively small degrees in $p_1$
and $p_2$. Finally, we take resultants with respect to $p_1$,
obtaining two equations for $p_2$, of which the only common root is
the one listed above. Working back, we solve for all the other
variables, obtaining the system above and completing the proof of
Theorem \ref{explicitE8}. The deduction of Theorem \ref{excellentE8}
now proceeds exactly as in the case of $E_7$.

\begin{remark}
  As in the $E_7$ case, once we find the explicit relation between the
  Weierstrass coefficients and the fundamental characters, we may go
  back and explore the ``genericity condition'' for this family to
  have Mordell-Weil lattice isomorphic to $E_8$. To do this we compute
  the discriminant of the cubic in $x$, after completing the square in
  $y$, and take the discriminant with respect to $t$ of the resulting
  polynomial of degree $11$. A computation shows that this
  discriminant factors as the cube of a polynomial $A(\la)$ (which
  vanishes exactly when the family has a fiber of additive reduction,
  generically type $\II$), and the product of $(e^{\alpha} - 1)$,
  where $\alpha$ runs over minimal vectors of $E_8$. Again, the
  genericity condition to have Mordell-Weil lattice exactly $E_8$ is
  just the nonvanishing of
$$
\Phi_\la(1) = \prod (e^\alpha - 1),
$$ the expression which occurs in the Weyl denominator
formula. Furthermore, the condition to have only multiplicative fibers
is that $\Phi_\la(1)A(\la) \neq 0$.
\end{remark}

As before, the proof of Theorem \ref{E8degen} follows immediately from
the results of \cite{Sh6, Sh7}, by degeneration from a flat family.

\section{Resultants, Interpolation and Computations} \label{resultants}

We now explain a computational aid, used in obtaining the equations
expressing the coefficients of $\Phi_\la$ (for $E_8$) or $\Psi_\la$
(for $E_7$) in terms of the Weierstrass coefficients of the associated
family of rational elliptic surfaces. We illustrate this using the
system of equations obtained for sections of the $E_8$ family:
\begin{align*}
h^2 &= g^3 + g^2 \\
2ch &= 3ag^2+2ag+1 \\
c^2 + 2dh &= q_4+gp_2+3bg^2+(2b+3a^2)g+a^2 \\
2eh + 2cd &= q_3+ap_2+gp_1+6abg+2ab+a^3 \\
2ce + d^2 &= q_2+bp_2+ap_1+gp_0+3b^2g+b^2+3a^2b \\
2de &= q_1+bp_1+ap_0+3ab^2 \\
e^2 &= q_0 + b p_0 + b^3.
\end{align*}
Setting $h = gu$ and solving the first equation for $g$ we have $g =
u^2 - 1$. We solve the next three equations for $c,d,e$
respectively. This leaves us with three equations $R_1(a,b,u) =
R_2(a,b,u) = R_3(a,b,u) = 0$. These have degrees $2,2,3$ respectively
in $b$. Taking the appropriate linear combination of $R_1$ and $R_2$
gives us an equation $S_1(a,b,u) = 0$ which is linear in
$b$. Similarly, we may use $R_1$ and $R_3$ to obtain another equation
$S_2(a,b,u) = 0$, linear in $b$. We write
\begin{align*}
S_1(a,b,u) &= s_{11}(a,u) b + s_{10} (a,u) \\
S_2(a,b,u) &= s_{21}(a,u) b + s_{20} (a,u) \\
R_1(a,b,u) &= r_2 (a,u) b^2 + r_1 (a,u) b + r_0 (a,u).
\end{align*}
The resultant of the first two polynomials gives us an equation
$$
T_1 (a,u) = s_{11} s_{20} - s_{10} s_{21} = 0
$$
while the resultant of the first and third gives us 
$$
T_2 (a,u) = r_2 s_{10}^2 - r_1 s_{10} s_{11} + r_0 s_{11}^2 = 0.
$$

Finally, we substitute $u =(v+1)/(v-1)$ throughout, obtaining two
equations $\tilde{T}_1(a,v) = 0$ and $\tilde{T}_2(a,v) = 0$.

Next, we would like to compute the resultant of $\tilde{T}_1(a,v)$ and
$\tilde{T}_2(a,v)$, which have degrees $8$ and $9$ with respect to $a$,
to obtain a single equation satisfied by $v$. However, the polynomials
$\tilde{T}_1$ and $\tilde{T}_2$ are already fairly large (they take a
few hundred kilobytes of memory), and since their degree in $a$ is not
too small, it is beyond the current reach of computer algebra systems
such as \texttt{gp/PARI} or \texttt{Magma} to compute their
resultant. It would take too long to compute their resultant, and
another issue is that the resultant would take too much memory to
store, certainly more than is available on the authors' computer
systems (for instance, it would take more than 16GB of memory).

To circumvent this issue, what we shall do is to use several
specializations of $\lambda$ in $\Q^8$. Once we specialize, the
polynomials take much less space to store, and the computations of the
resultants becomes tremendously easier.  Since the resultant can be
computed via the Sylvester determinant
$$
\left|
\begin{array}{ccccccccccc}
a_8  & \dots & a_2 & a_1 & a_0 & 0 & 0 & \dots & 0 \\
0 & a_8 & \dots & a_2 & a_1 & a_0 & 0 & \dots & 0  \\
\vdots & \ddots & \ddots & & & \ddots & \ddots & \ddots & \vdots \\
0 & \dots & 0  & a_8 & \dots & a_2 & a_1 &  a_0 & 0 \\
0 & \dots & 0 & 0 & a_8 &  \dots & a_2 & a_1 & a_0 \\
b_9 & b_8 & \dots & b_2 & b_1 & b_0 & 0 & \dots & 0 \\
0 & b_9 & b_8 & \dots & b_2 & b_1 & b_0 & \ddots & \vdots \\
\vdots & \ddots & \ddots  & & & & \ddots & \ddots & 0 \\
0 & \dots & 0 & b_9 & b_8 & \dots & b_2 & b_1 & b_0 
\end{array}
\right|
$$
where $\tilde{T}_1(a,v) = \sum a_i(v) a^i$ and $\tilde{T}_2(a,v) =
\sum b_i(v) a^i$, we see that the resultant is a polynomial $Z(v) =
\sum z_i v^i$ with coefficients $z_i$ being polynomials in the
coefficients of the $a_i$ and the $b_j$, which happen to be elements
of $\Q[\la]$ (recall that $\lambda = (p_0, \dots, p_2, q_0, \dots,
q_4)$). Furthermore, we can bound the degrees $m_i(j)$ of $z_i(v)$
with respect to the $j$'th coordinate of $\lambda$, by using explicit
bounds on the multidegrees of the $a_i$ and $b_i$. Therefore, by using
Lagrange interpolation (with respect to the eight variables
$\lambda_j$) we can reconstruct $z_i(v)$ from its specializations for
various values of $\lambda$. The same method lets us show that $Z(v)$
is divisible by $v^{22}$ (for instance, by showing that $z_0$ through
$z_{21}$ are zero), and also by $(v+1)^{80}$ (by first shifting $v$ by
$1$ and then computing the Sylvester determinant, and proceeding as
before), as well as by $(v^2 + v + 1)^8$ (this time, using cube roots
of unity). Finally, it is clear that $Z(v)$ is divisible by the square
of the resultant $G(v)$ of $s_{11}$ and $s_{10}$ with respect to
$a$. Removing these extraneous factors, we get a polynomial
$\Phi_\la(v)$ which is monic and reciprocal of degree $240$. We
compute its top few coefficients by this interpolation method.

Finally, we note the interpolation method above is in fact completely
rigorous. Namely, let $\eps_i(\la)$ be the coefficient of $v^i$ in
$\Phi_\la(v)$, with bounds $(m_1, \dots, m_8)$ for its multidegree,
and $\eps'_i(\la)$ the putative polynomial we have computed using
Lagrange interpolation on a set $L_1 \times \dots \times L_8$, where
$L_i = \{ \ell_{i,0}, \dots, \ell_{i,m_i} \}$ for $1 \leq i \leq 8$
are sets of integers chosen generically enough to ensure that $G(v)$
has the correct degree and that $Z(v)$ is not divisible by any higher
powers of $v$, $v+1$ or $v^2 + v + 1$ than in the generic case. Then
since $\eps_j(\ell_{1, i_1}, \dots, \ell_{8, i_8}) = \eps'_j(\ell_{1,
  i_1}, \dots, \ell_{8, i_8})$ for all choices of $i_1, \dots, i_8$,
we see that the difference of these polynomials must vanish.

\section{Representation theory, and some identities in Laurent
  polynomials} \label{identities}

Finally, we demonstrate how to deduce the identities relating the
coefficients of $\Phi_{E_7, \la}(X)$ or $\Psi_{E_7, \la}(X)$ to the
fundamental characters for $E_7$ (and similarly, the coefficients of
$\Phi_{E_8, \la(X)}$ to the fundamental characters of $E_8$).

Conceptually, the simplest way to do this is to express the
alternating powers of the $56$-dimensional representation $V_7$ or the
$133$-dimensional representation $V_1$ in terms of the fundamental
modules of $E_7$ and their tensor products. We know that the character
$\chi_1$ of $V_1$ is $7 + \sum e^{\alpha}$, where the sum is over the
$126$ roots of $E_7$. Therefore we have $(-1) \eta_1 = \chi_1 -
7$. For the next example, we consider $\bigwedge^2 V_1 = V_3 \oplus
V_1$. This gives rise to the equation
$$
\eta_2 + 7 \cdot (-1)\eta_1 + {7 \choose 2} = \chi_3 + \chi_1
$$
which gives the relation $\eta_2 = \chi_3 - 6 \chi_1 + 28$. 

A similar analysis can be carried out to obtain all the other
identities used in our proofs, using the software \texttt{LiE},
available from \url{http://www-math.univ-poitiers.fr/\~maavl/LiE/}.

A more explicit method is to compute the expressions for the $\chi_i$
as Laurent polynomials in $s_1, \dots, s_6, r$ (note that $s_7 =
r^3/(s_1 \dots s_6)$), and then do the same for the $\eps_i$ or
$\eta_i$. The latter calculation is simplified by computing the power
sums $\sum (e^{\alpha})^i$ (for $\alpha$ running over the smallest
vectors of $E_7^*$ or $E_7$), for $1 \leq i \leq 7$ and then using
Newton's formulas to convert to the elementary symmetric polynomials,
which are $(-1)^i \eps_i$ or $(-1)^i \eta_i$. Finally, we check the
identities by direct computation in the Laurent polynomial ring (it
may be helpful to clear out denominators). This method has the
advantage that we obtain explicit expressions for the $\chi_i$ (and
then for $\lambda$ by Theorem \ref{explicitE7}) in terms of $s_1,
\dots, s_6, r$, which may then be used to generate examples such as
Example \ref{splitexampleE7}.

\section{Acknowledgements}

We thank David Vogan for very helpful comments regarding computations
in representation theory. The computer algebra systems
\texttt{gp/PARI}, \texttt{Magma}, \texttt{Maxima} and the software
\texttt{LiE} were used in the calculations for this paper. We thank
Matthias Sch\"utt and the referees for a careful reading of the paper
and for many helpful comments.

The auxiliary computer files for checking our calculations are
available from \url{http://arxiv.org/abs/1204.1531}. To access them,
download the source file for the paper. This will produce not only the
\LaTeX{} file for this paper, but also the computer code.  The file
\texttt{README.txt} gives an overview of the various computer files
involved.

\end{document}